\newtheorem{theo}{Theorem}[section]
\newtheorem{theorem}{Theorem}
\newtheorem*{theorem*}{Theorem}
\newtheorem{lemma}[theo]{Lemma}
\newtheorem{corollary}[theo]{Corollary}
\newtheorem{proposition}[theo]{Proposition}
\newtheorem{definition}[theo]{Definition}
\theoremstyle{definition}
\newtheorem{example}{Example}
\newtheorem{remark}[theo]{Remark}
\newtheorem*{remark*}{Remark}
\newcommand{\eqdef}{\stackrel{\scriptscriptstyle\rm def}{=}}
\DeclareMathOperator{\card}{card}
\DeclareMathOperator{\Per}{Per}
\DeclareMathOperator{\supp}{supp}
\DeclareMathOperator{\ch}{ch}
\def\blambda{\bm{\lambda}}
\def\bZ{\mathbb{Z}}
\def\bR{\mathbb{R}}
\def\cRR{\EuScript{R}}
\def\cone{\mathscr{C}}
\def\cO{\EuScript{O}}
\def\cU{\EuScript{U}}
\def\cM{\EuScript{M}}
\def\La{\Lambda}
 \def\NN{{\mathbb N}}  \def\PP{{\mathbb P}}
 \def\RR{{\mathbb R}}  \def\TT{{\mathbb T}}
 \def\ZZ{{\mathbb Z}}
\DeclareMathSymbol{\varnothing}{\mathord}{AMSb}{"3F}
\title[Dominated Pesin theory]{Dominated Pesin theory:\\ convex sum of hyperbolic measures}
\author{Jairo Bochi}\address{Facultad de Matem\'aticas, Pontificia Universidad Cat\'olica de Chile. Avenida Vicu\~na Mackenna 4860, Santiago, Chile}
\email{jairo.bochi@mat.uc.cl}
\urladdr{http://www.mat.uc.cl/~jairo.bochi/}
\author{Christian Bonatti} \address{CNRS and IMB, Universit\'e de Bourgogne, Dijon, France}
\email{bonatti@u-bourgogne.fr}
\urladdr{http://bonatti.perso.math.cnrs.fr/}
\author{Katrin Gelfert} \address{IM, Universidade Federal do Rio de Janeiro, Cidade
Universit\'aria - Ilha do Fund\~ao, Rio de Janeiro 21945-909, Brazil}
\email{gelfert@im.ufrj.br}
\urladdr{http://www.im.ufrj.br/~gelfert/}
\keywords{dominated splitting, $C^1$-Pesin theory, hyperbolic measures, Poulsen simplex, homoclinic relation}
\subjclass[2010]{
37C29, 
37C40, 
37C50, 
37D25, 
37D30, 
28A33
}
\thanks{This paper was supported by Ci\^encia Sem Fronteiras CAPES and CNPq (Brazil). The authors acknowledge the kind hospitality of 
Institut de Math\'ematiques de Bourgogne, Dijon (France) and of Departamento de Matem\'atica, PUC-Rio (Brazil).}
\begin{document}
\maketitle
\begin{abstract}
In the uniformly hyperbolic setting it is well known that the set of all
measures supported on periodic orbits is dense in the convex space of all invariant measures.  
In this paper we consider the converse question, in the non-uniformly hyperbolic setting: assuming that some
ergodic measure converges to a convex combination of hyperbolic ergodic measures, what can we deduce about the initial measures?

To every hyperbolic measure $\mu$ whose stable/unstable Oseledets splitting is dominated we associate canonically a  unique class $H(\mu)$ of periodic orbits for the homoclinic relation, called its \emph{intersection class}. In a dominated setting, we prove that a measure for which almost every measure in its ergodic decomposition is hyperbolic with the same index such as the dominated splitting is accumulated by ergodic measures if, and only if, almost all such ergodic measures have a common intersection class.

We provide examples which indicate the importance of the domination assumption.\end{abstract}

\section{Introduction} 

\subsection{Quick presentation of the results}

The space $\cM_f$ of invariant measures by a homeomorphism $f$ of a compact metric space is a compact metric space 
(for the the weak$\ast$ topology) and is convex. We denote by $D$ a distance on $\cM_f$ defining the weak$\ast$ topology. The ergodic measures are the extremal points of this convex set and 
any invariant measure can be written  as a unique convex sum of ergodic measures  and 
called its  \emph{disintegration in ergodic measures} or \emph{$f$-ergodic decomposition}.

A typical picture of hyperbolic dynamics is that the ergodic measures associated to periodic orbits may be dense in $\cM_f$. 
More precisely, if you consider a shift or a subshift
of finite type, there are periodic orbits following an arbitrary itinerary. Hence, given any  ergodic measures $\mu_1,\ldots,\mu_k$, 
there are periodic orbits
which follow a given proportion of time a typical point of the measure $\mu_1$ in an orbit segment long enough for approaching $\mu_1$ 
and then follow 
$\mu_2$ and so on, so that the measure obtained at the period is arbitrarily close to a given convex combination of the $\mu_i$. 
Now, if $f$ is a diffeomorphism on a manifold and if $\Lambda$ is an invariant (uniformly) hyperbolic basic set, 
the existence of a Markov partitions allows us to transfer this property to the set $\cM_f(\Lambda)$ of invariant measures supported on $\Lambda$.

To understand the non-hyperbolic dynamical systems, one often starts the study by considering the hyperbolic parts contained in $f$. For instance, 
 if $p$ and $q$ are hyperbolic periodic points with the same index, one says that they  are \emph{homoclinically related}  if 
 the stable and unstable manifold of $p$ cut transversely the unstable and stable  manifolds of $q$, respectively.
 A famous result by Smale shows that, if $p$ and $q$ are homoclinically related, then they are contained in a hyperbolic basic set.  
 Thus, any convex combination of the invariant measures associated to $p$ and $q$ is accumulated by periodic orbits.

 In this paper, we are interested in some kind of converse to this property. 
 Given a diffeomorphism $f$ of a compact manifold $M$ and given two
 ergodic measures $\mu_1$ and $\mu_2$:
 \emph{what can we say about the dynamical behavior of $f$ if we know that a convex combination 
 $\alpha\mu_1+(1-\alpha)\mu_2$ is the limit of measures associated to periodic orbits?}
In Section~\ref{sec:examples} we give examples showing that this approximation of a convex combination does not 
imply anything about homoclinic relations, even in the $\bR$-analytic setting, if the dynamic fails to be dominated.
 
 Our main result states that, if the stable/unstable splitting is  dominated, 
 then  \emph{the approximation of convex combinations  by ergodic measures is equivalent
 to transverse homoclinic intersection}. The idea consists  in controlling the size of the 
 Pesin's invariant manifolds at the time the orbits are following generic points of one or the other measure, 
 and to check that this size is enough for getting transverse intersections. Usually, Pesin theory does not hold
 in the $C^1$ setting (see \cite{Pug:84,BonCroShi:13}). However, \cite{AbdBonCro:11} shows 
 that Pesin theory holds for $C^1$-diffeomorphism if one assume that the 
 stable/unstable splitting is  dominated, which is precisely our setting. This explains why our statements 
 hold for $C^1$-diffeomorphisms.  Let us emphasize that our results are not perturbative results: 
 we are not creating periodic orbits, measures, or homoclinic intersections 
 by using some perturbation lemma, but prove that they already exist. 
 
Let us now present precisely this result. 

\medskip\noindent{\textbf{Standing hypothesis.} 
Let $f$ be a $C^1$ diffeomorphism of a Riemannian manifold $M$, $\Lambda\subset M$ a invariant compact  set which admits a dominated splitting $E\oplus_{_<}F$ with $s$-index $\dim E$. Let $\tilde\Lambda$ be the maximal invariant set in a neighborhood of $\Lambda$ so that the dominated splitting $E\oplus_{_<}F$ extends to $\tilde\Lambda$.

\medskip 
Here recall that $T_\Lambda M=E\oplus_{_<}F$ \emph{dominated splitting}  over a set $\Lambda$ if the bundles $E$ and $F$ are $df$-invariant and there is a Riemannian metric over $M$ so that $df$ expands the vectors in $E_x$ strictly less than in $F_x$. We call $\dim E$ the \emph{$s$-index} of the splitting (for further details see Section~\ref{sec:perhypdom}). 
Let $\mu$ be an $f$-invariant Borel probability measure supported on $\Lambda$. We say that $\mu$ is \emph{hyperbolic}  if its Lyapunov exponents are nonzero almost everywhere, and if almost all points have the same number of negative Lyapunov exponents, in this case we call this number the \emph{$s$-index} of $\mu$. The \emph{stable/unstable splitting} of $\mu$ is 
 the Oseledets splitting $E^s\oplus_{_<} E^u$ defined over  the generic points of $\mu$, where $E^s$ ($E^u$) is the sum of the Lyapunov spaces corresponding to the negative (positive) Lyapunov exponents. If $\mu$ supported on $\Lambda$ is hyperbolic with
 $s$-index of $\mu$ equal to the $s$-index of 
the dominated splitting $E\oplus_{_<}F$, then $E^s_x=E_x$ and $E^u_x=F_x$ for any $\mu$-generic point $x$. One then says that  \emph{the stable/unstable splitting of $\mu$ is dominated}.
 
 If $\mu$ is a hyperbolic ergodic  measure whose stable/unstable splitting is dominated, then \cite[Proposition 1.4]{Cro:11} 
 (see Corollary~\ref{corpro:sylvnew})
 shows that every generic point $x$ of $\mu$ is the limit of  periodic orbits approaching $\mu$ in 
 the weak$\ast$ topology, and approaching the support of $\mu$ in the Hausdorff topology; these periodic orbits
 are hyperbolic of the same $s$-index as $\mu$; by the subadditive and maximal ergodic theorems we also verify that there is a uniform proportion of times on these orbits for which the size of their invariant manifolds is uniform (see Proposition~\ref{pro:ABapprox}). As a consequence we prove the following.
 
 \begin{theorem}\label{t.homoclinic}
Let $f$ be a $C^1$ diffeomorphism of a Riemannian manifold $M$, $\Lambda\subset M$ a compact invariant set with a dominated splitting $E\oplus_{_<}F$ with $s$-index $\dim E$, 
and $\mu$ a hyperbolic  invariant (not necessarily ergodic) Borel probability measure supported in  $\Lambda$ with $s$-index $\dim E$.
Let $\tilde\Lambda$ be the maximal invariant set in a neighborhood of $\Lambda$ so that 
the dominated splitting $E\oplus_{_<}F$ 
extends to $\tilde\Lambda$.
	
There exists a positive number $\varkappa=\varkappa(\mu)$ such that any pair $p_1,p_2\in\tilde\La$ of periodic points so that the corresponding invariant measures  $\nu_1, \nu_2 \in\cM_f(\tilde\Lambda)$ 
satisfy $D(\nu_i,\mu)<\varkappa$, $i=1,2$,  are homoclinically related.
\end{theorem}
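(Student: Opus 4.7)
The plan is to use the $C^1$ dominated Pesin theory of \cite{AbdBonCro:11} together with the Pliss lemma to produce, on each of the two periodic orbits, a point at which a local stable or unstable manifold of a \emph{uniform} size $\delta_0$ exists. Once both orbits are known to carry such points, the weak$\ast$ proximity of $\nu_i$ to $\mu$ will be leveraged to force the points to lie in a common small ball; a local product structure argument, based on the continuity of the dominated splitting on $\tilde\Lambda$, will then furnish the required transverse homoclinic intersections.

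First, I would fix constants $\lambda^s<0<\lambda^u$ strictly separated from the Lyapunov exponents of $\mu$, namely $\int\log\|Df|_E\|\,d\mu<\lambda^s$ and $\int\log\|Df^{-1}|_F\|\,d\mu<-\lambda^u$. The dominated $C^1$ Pesin theory then provides a uniform $\delta_0>0$ such that every forward-$\lambda^s$-Pliss time $q\in\tilde\Lambda$ admits a local stable manifold of size $\delta_0$ tangent to $E(q)$, and every backward-$\lambda^u$-Pliss time admits a local unstable manifold of size $\delta_0$ tangent to $F(q)$. Using the continuity on $\tilde\Lambda$ of the maps $x\mapsto\log\|Df|_{E(x)}\|$ and $x\mapsto\log\|Df^{-1}|_{F(x)}\|$, one chooses $\varkappa>0$ so small that $D(\nu,\mu)<\varkappa$ implies $\int\log\|Df|_E\|\,d\nu<\lambda^s-\eta_0$ and the analogous bound for $F$, for some fixed $\eta_0>0$. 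Applying Pliss (Section~\ref{s.pliss}) to the Birkhoff sums along the periodic orbit of $p_i$ then yields, for a uniform $\eta>0$, a set $T_i^s\subset\{0,\ldots,N_i-1\}$ of forward Pliss times of density at least $\eta$, and a set $T_i^u$ of backward Pliss times of density at least $\eta$.

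To match Pliss times of the two orbits, cover $\supp\mu$ by finitely many open balls $B(x_1,\epsilon),\ldots,B(x_J,\epsilon)$ with $\epsilon\ll\delta_0$. By weak$\ast$ proximity (shrinking $\varkappa$ if necessary), the orbit of each $p_i$ lies in $\bigcup_j B(x_j,\epsilon)$ with proportion at least $1-\eta/10$, so a proportion at least $9\eta/10$ of the Pliss times in $T_i^s$ and $T_i^u$ falls within this cover. A pigeonhole argument over the finite cover, applied simultaneously to both orbits and to the forward and backward Pliss sets, produces a ball $B(x_{j^\ast},2\epsilon)$ containing a forward-Pliss orbit point $q_1^s$ of $p_1$ and a backward-Pliss orbit point $q_2^u$ of $p_2$, together with a symmetric pair $q_2^s, q_1^u$. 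At each of these four points a local stable or unstable manifold of size $\delta_0$ tangent to $E$ or $F$ is available, and since $E\oplus F$ is continuous on $\tilde\Lambda$, for $\epsilon\ll\delta_0$ the standard cone-field local product argument delivers the transverse intersections $W^u_{\delta_0}(q_1^u)\pitchfork W^s_{\delta_0}(q_2^s)\ne\emptyset$ and $W^s_{\delta_0}(q_1^s)\pitchfork W^u_{\delta_0}(q_2^u)\ne\emptyset$; lifting to the orbits of $p_1$ and $p_2$, these give the homoclinic relation.

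The hard part will be the combinatorial matching in the pigeonhole step: each orbit has only a density-$\eta$ set of Pliss times and a density close to $1$ of visits to $\bigcup_j B(x_j,\epsilon)$, but there is no a priori reason why a Pliss time of $p_1$ should land in the \emph{same} small ball as a Pliss time of the opposite type of $p_2$. One way to resolve this is to take $\lambda^s,\lambda^u$ as close as possible to the Lyapunov exponents of $\mu$ so that $\eta$ is forced near $1$; a more robust alternative is to interpose an auxiliary Crovisier periodic orbit $P$ provided by Proposition~\ref{pro:sylv}, which enjoys the stronger Hausdorff closeness of $\supp P$ to $\supp\mu$ and thus automatically passes near any chosen $\mu$-generic point with uniform local manifolds, and then conclude by transitivity of the homoclinic relation.
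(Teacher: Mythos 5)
Your proposal correctly identifies the ingredients (dominated $C^1$ Pesin theory, Pliss times, weak$\ast$ proximity) and reaches essentially the same point as the paper's argument up to the production, on each periodic orbit, of a density--$\theta$ set of points carrying invariant disks of a uniform size $\delta$. But the pigeonhole step is the crux, and the gap you flag is real and not closed by either of your suggested remedies.

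Your fix (a) goes in the wrong direction. The Pliss density is $\theta=(c_2-c_1)/(A-c_1)$, where $c_2$ is essentially the Lyapunov average of the Birkhoff sums and $c_1$ the Pliss threshold. Pushing $\lambda^s$ (respectively $\lambda^u$) towards the Lyapunov exponents of $\mu$ corresponds to $c_1\to c_2$, which sends $\theta\to0$, not $\eta\to1$; and since $A$ (a bound for $\log\lVert Df\rVert$) is fixed, $\theta$ is in any case bounded away from $1$. Your fix (b), interposing an auxiliary Crovisier orbit $P$, relocates but does not solve the matching problem: to deduce $p_1\sim_H P$ you still need $p_1$ to visit the base of $P$ at a forward Pliss time \emph{and} at a backward Pliss time (or to visit two points that are themselves already known to be homoclinically linked through $P$), which is exactly the alignment that was in doubt. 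Proposition~\ref{pro:sylv} gives a sequence of auxiliary orbits that are pairwise related to one another near a fixed $\mu$-generic point, but it does not by itself give a relation to the $p_i$.

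What closes the gap in the paper is an argument you do not have: pass to weak$\ast$ subsequential limits of the \emph{restricted} measures $\nu^s_n=\nu_n|_{G^E(p_n)}$ and $\xi^u_n=\xi_n|_{G^F(q_n)}$ (each of mass $\geq\theta$), obtaining measures $\nu^s,\xi^u$ dominated by $\mu$ with $\mu(\supp\nu^s)\geq\theta$ and $\mu(\supp\xi^u)\geq\theta$. Ergodicity of $\mu$ then produces an integer $k$ with $\mu(\supp\nu^s\cap f^k(\supp\xi^u))>0$; taking a tiny ball around such a common point, one finds for $n$ large a point of $G^E(p_n)$ and a point of $f^k(G^F(q_n))$ inside that ball, and then Lemma~\ref{lem:prostru} (after shrinking by $\lVert Df^{-1}\rVert^{-k}$) forces a transverse intersection, giving the contradiction. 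The decisive new ideas are the compactness step (subsequential limits of the restricted masses) and the $f^k$-shift allowed by ergodicity of $\mu$; without these you have no a priori reason for a forward Pliss time of $p_1$ and a backward Pliss time of $p_2$ to land near a common ball, as you noticed. Replace your pigeonhole step with this ergodicity-plus-shift argument and the rest of your outline goes through.
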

 
 We consider the homoclinic relation on the set of hyperbolic periodic orbits in $\tilde\Lambda$. It is an 
 equivalence relation and  we call \emph{intersection classes} (relative to $\tilde\Lambda$) the equivalence classes for the homoclinic relation which are contained in $\tilde\Lambda$.%
\footnote{The \emph{intersection class} of a hyperbolic periodic point was first considered in \cite{New:80} and called \emph{$h$-class}. With this terminology, the classical \emph{homoclinic class} of a hyperbolic periodic point (called \emph{$h$-closure} in \cite{New:80}) is the closure of its intersection class. See also Remark~\ref{rem:hertzens} for similar, but \emph{a priori} unrelated, concepts.} 
Under the hypotheses of Theorem~\ref{t.homoclinic}, \cite[Proposition 1.4]{Cro:11} together with Theorem~\ref{t.homoclinic}  associates  to any hyperbolic invariant measure $\mu$ which is accumulated by ergodic measures (and, in particular, to any hyperbolic ergodic measure $\mu$) a unique intersection class, denoted by $H(\mu)$ and called the \emph{intersection class of $\mu$} (relative to $\tilde\Lambda$): the class $H(\mu)$ is  the intersection class of any periodic orbit in $\tilde \La$ whose corresponding measure is close enough to $\mu$. Since we always will restrict our considerations to $\tilde\Lambda$ alone, below we simply will talk about \emph{intersection classes}.
We will prove Theorem~\ref{t.homoclinic} at the end of Section~\ref{sec:intcla}.

The following result is an immediate consequence of Theorem~\ref{t.homoclinic}.
 
 \begin{corollary}\label{c.homoclinic} 
 	Under the hypotheses of Theorem~\ref{t.homoclinic},  any ergodic measure 
 $\nu\in \cM_f(\tilde\Lambda)$ with $D(\nu,\mu)<\varkappa(\mu)$  shares the same intersection class, that is, we have
\[
	H(\nu)=H(\mu).
\] 
\end{corollary}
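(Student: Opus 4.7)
The plan is to exhibit two hyperbolic periodic orbits in $\tilde\Lambda$, one that witnesses the class $H(\mu)$ and one that witnesses $H(\nu)$, and then to invoke Theorem~\ref{t.homoclinic} to show that they are homoclinically related, hence belong to the same intersection class. The guiding principle is that the intersection class of a measure is determined by any periodic orbit whose associated measure is within $\varkappa$ of it, so it is enough to find two such orbits lying in a common intersection class.

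First, since $\nu$ is itself an ergodic hyperbolic measure supported in $\tilde\Lambda$ with dominated stable/unstable splitting of $s$-index $\dim E$, the hypotheses of Theorem~\ref{t.homoclinic} apply to $\nu$ as well, so $H(\nu)$ is well-defined with an associated constant $\varkappa(\nu)>0$. I would then apply Proposition~\ref{pro:sylv} (that is, \cite[Proposition 1.4]{Cro:11}) twice: to $\mu$, in order to obtain a sequence of hyperbolic periodic orbits $p_n\in\tilde\Lambda$ of $s$-index $\dim E$ whose associated measures $\mu_{p_n}$ converge weak$*$ to $\mu$; and to $\nu$, in order to obtain analogous orbits $q_n\in\tilde\Lambda$ with $\mu_{q_n}\to\nu$.

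For $n$ sufficiently large, the triangle inequality yields
\[
	D(\mu_{p_n},\mu)<\varkappa(\mu)
	\quad\text{and}\quad
	D(\mu_{q_n},\mu)\le D(\mu_{q_n},\nu)+D(\nu,\mu)<\varkappa(\mu),
\]
since $D(\nu,\mu)<\varkappa(\mu)$ by hypothesis and $D(\mu_{q_n},\nu)\to 0$. Theorem~\ref{t.homoclinic} applied to the pair $(p_n,q_n)$ then forces these two orbits to be homoclinically related, so they share a common intersection class. By construction this class is $H(\mu)$, as witnessed by $p_n$; and for $n$ large enough we also have $D(\mu_{q_n},\nu)<\varkappa(\nu)$, so the same class equals $H(\nu)$, as witnessed by $q_n$. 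Therefore $H(\mu)=H(\nu)$.

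There is no real obstacle beyond careful bookkeeping of the two constants $\varkappa(\mu)$ and $\varkappa(\nu)$ along the two approximating sequences: the corollary is essentially a direct assembly of Theorem~\ref{t.homoclinic} with the density result of Proposition~\ref{pro:sylv}, the only subtlety being to ensure that the approximating periodic orbits sit inside $\tilde\Lambda$, which follows because they approach $\supp\mu$ and $\supp\nu$ in Hausdorff distance and $\tilde\Lambda$ is the maximal invariant set in a fixed neighborhood of $\Lambda$.
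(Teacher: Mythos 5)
Your proof is correct and takes the approach the paper intends (the paper states the corollary without proof, as an immediate assembly of Theorem~\ref{t.homoclinic} with Proposition~\ref{pro:sylv}). The structure is sound: produce sequences $(p_n)$ and $(q_n)$ via Proposition~\ref{pro:sylv}, use the strict inequality $D(\nu,\mu)<\varkappa(\mu)$ to push $\mu_{q_n}$ into the $\varkappa(\mu)$-ball around $\mu$ for large $n$, and then apply Theorem~\ref{t.homoclinic} to the pair $(p_n,q_n)$ to link the two witnessing orbits.

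One small point you assert rather than justify: that $\nu$ is itself a hyperbolic ergodic measure of $s$-index $\dim E$ with dominated stable/unstable splitting, so that $H(\nu)$ and $\varkappa(\nu)$ are defined and Proposition~\ref{pro:sylv} applies to $\nu$. This is not part of the corollary's hypotheses; it needs Proposition~\ref{p.hyperbolic}(1) (or Corollary~\ref{cor:unifroro2}), which guarantee precisely that every ergodic measure in $\cM(\tilde\Lambda)$ within $\varkappa(\mu)$ of $\mu$ is hyperbolic of $s$-index $\dim E$ (the constant $\varkappa(\mu)$ in Theorem~\ref{t.homoclinic} is, by construction in the paper's proof, already small enough for this). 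Citing that explicitly would close the only small gap; otherwise the argument is exactly the intended one.
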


 As noticed above, convex combinations of the measures associated to homoclinically related periodic orbits are limits of periodic orbits 
 in the same class. Observing that if $\mu$ is hyperbolic ergodic  then it can be approximated by periodic orbits (see Corollary~\ref{corpro:sylvnew}), as a direct consequence we get the following.
 
\begin{remark}\label{r.homoclinic} 
Under the hypotheses of Theorem~\ref{t.homoclinic}, if $\mu,\nu\in \cM_f(\La)$ are hyperbolic ergodic with $s$-index $\dim E$ such that $\nu$ has the same intersection class as $\mu$, that is, $H(\nu)=H(\mu)$, then any convex combination $\alpha\mu+(1-\alpha)\nu$, $\alpha\in[0,1]$, is the weak$\ast$ limit of measures associated to periodic orbits in $\tilde\Lambda$ belonging to this class.  
\end{remark}

The main purpose of this paper is to prove the converse to Remark~\ref{r.homoclinic}.  
 We are now ready to state our main result in that direction.

 \begin{theorem}\label{the:sinterclass} 
Let $f$ be a $C^1$ diffeomorphism of a Riemannian manifold $M$ and $\Lambda\subset M$ be a compact invariant set with a dominated splitting $T_\Lambda M=E\oplus_{_<}F$, and $\mu\in\cM_f(\Lambda)$ be a hyperbolic measure with $s$-index equal to $\dim E$. Let $\tilde \Lambda$ be the maximal invariant set in a neighborhood of $\Lambda$ so that the dominated splitting $E\oplus_{_<}F$ extends to $\tilde \Lambda$. Let $\mu=\int\nu\,d\blambda(\nu)$ be the ergodic decomposition of $\mu$. Then $\mu$ is accumulated by ergodic measures $\nu_n\in\cM_f(\tilde\Lambda)$ if, and only if, $\blambda$-almost all ergodic measures $\nu$ are hyperbolic with index $\dim E$ and have the same intersection class $H=H(\nu)$. Moreover, in this case $H=H(\mu)$.
\end{theorem}

To sketch the proof of Theorem~\ref{the:sinterclass}, assume that $\mu=\int\nu\,d\blambda(\nu)$ is approached by ergodic measures $\nu_n$ in $\cM_f(\tilde \La)$. Then applying Theorem~\ref{t.homoclinic} we first verify that the measures $\nu_n$ are hyperbolic with the same $s$-index as $\blambda$-almost every $\nu$, hence as the dominated splitting $E\oplus_{_<}F$ and that each measure $\nu_n$ is a limit of measures supported on hyperbolic periodic orbits with index $\dim E$ in $\tilde \La$. Thus, one can assume that the measures $\nu_n$ are supported on periodic orbits $\cO_n$. The key step in the proof of Theorem~\ref{the:sinterclass} is to prove that $\cO_n$ have invariant manifolds of uniform size at points of the orbits close to generic points of a $\blambda$-typical ergodic measure $\nu$. This proof will be completed in Section~\ref{sec:prooof}.
 
In Section~\ref{sec:examples} we will give examples of smooth diffeomorphisms having hyperbolic saddle fixed points $p_1,p_2$ such that a convex combination of their Dirac measures is approached by hyperbolic periodic orbits, even with exponents far away from $0$,  and  whose homoclinic and intersection classes  are disjoint (see Theorem~\ref{the:exeunihyp}). This illustrates the importance of the domination assumption in our results.

\subsection{Motivation and historical setting}

The question about ergodic measures associated to periodic orbits being dense in the space 
$\cM_f$ of $f$-invariant measures has been studied also in a more abstract setting, 
that is, without \emph{a priori} assuming that there is a differentiable hyperbolic dynamics present. 
Perhaps among the first attacking this general question was Sigmund~\cite{Sig:70,Sig:74} 
who showed that provided the dynamical system satisfies a so-called \emph{periodic orbit 
specification property} the ergodic measures and, in particular, the periodic orbit 
measures are dense in $\cM_f$. Sigmund's theorem applies to basic sets of axiom A 
diffeomorphisms~\cite{Sig:70,Sig:74}. Roughly speaking, the specification property says that given an arbitrary 
number of arbitrarily long orbit segments, one can find a periodic orbit which stays $\varepsilon$-close 
to each of those segments and between one segment and the next one needs a fixed number of iterations 
which only depends on $\varepsilon$ (see~\cite{Sig:74} for details). 
In the context of a basic set of an axiom A diffeomorphism, the existence of a Markov partition  
guarantees a symbolic 1-1 description of essentially all orbits. The shadowing property enables 
to arbitrarily connect given (``specified'') orbit pieces of arbitrary length  given by certain 
symbolic sequences. Concatenating these symbolic sequences infinitely often, leads to periodic 
shadowing orbits. As the Markov partition can be chosen with arbitrarily small diameter, this 
procedure enables the ``specification'' and production of periodic orbits with an arbitrary precision. 
Sigmund's specification property has been verified also in a number of topological dynamical systems 
such as, for example, topologically mixing subshifts of finite type~\cite{DenGriSig:76} and continuous 
topologically mixing maps on the interval~\cite{Blo:83,Buz:97}. 

The shadowing lemma holds more generally for transitive (uniformly) hyperbolic sets with some caution: 
if $\Lambda$ is a topologically transitive hyperbolic set, it admits a compact neighborhood in which the maximal invariant 
set 
$\tilde \Lambda$ is a topologically transitive hyperbolic set, and pseudo orbits in $\Lambda$ are shadowed by orbits in 
$\tilde\Lambda$. As a consequence, the convex hull of all invariant measure supported in $\Lambda$ is contained
in the closure of measures supported on periodic orbits contained in $\tilde \Lambda$. 

For a general diffeomorphism, the periodic orbit specification property 
beyond a uniformly hyperbolic context is often difficult to verify or fails to hold true. 
There exist various extensions of Sigmund's result under weaker assumptions. 
Perhaps the most interesting line of extension in the spirit of our paper are the 
so-called \emph{approximate product property}~\cite{PfiSul:05} and 
\emph{$g$-almost product property}~\cite{PfiSul:07} initiated by Pfister and Sullivan. 
Their variations of the specification property, roughly speaking, ``allow to make some 
number of errors'' in the resulting shadowing orbits up to some number of iterations which decays 
sufficiently fast when the length of the specified orbits grows to infinity. Under any of their 
conditions they show that in the space of invariant measures ergodic 
measures are still dense.

Much closer to our approach in the present paper is perhaps Hirayama~\cite{Hir:03} 
who studied a general $C^{1+\alpha}$ topologically mixing diffeomorphism preserving an ergodic hyperbolic 
probability measure. He shows that there exists a measurable set $\Gamma$ of  
full measure $\mu$ such that the set of all  measures supported on periodic hyperbolic orbits is 
dense in the (convex) space of invariant measures supported by $\Gamma$, meaning that $\Gamma$ has full measure for any measure in this space. 

All these studies tried to established denseness of ergodic (periodic) measures. 
However, to the best of our knowledge, little is known about dynamical systems which 
do not have such a property. Clearly, one has to disregard cases where the system dynamically 
splits into ``basic pieces'' such as, for example, attractor-repeller pairs or a similar family 
of  unrelated disjoint compact invariant sets. More precisely, Conley theory~\cite{Co:78} divides the dynamics of any
homeomorphism on a compact metric space into \emph{chain recurrent classes}. A non-ergodic measure $\mu$ cannot be
approached by ergodic measures if it is not supported on a unique chain recurrent class, 
and in particular if its disintegration into ergodic measures gives positive weight to measures supported in 
distinct chain recurrent classes. 

However, the undecomposability property of the chain recurrent classes is  a very weak property and
it is easy to 
build examples where non-ergodic measures  are not approached by ergodic ones\footnote{For example, consider a diffeomorphism
of the circle with fixed points which all are of saddle-node type.}. 
Another natural candidate for being an ``elementary piece'' of a diffeomorphisms is the 
\emph{homoclinic class} of a hyperbolic periodic orbit, which is the closure of their 
homoclinic intersection. For $C^1$-generic diffeomorphisms, the homoclinic classes are the chain recurrence classes
containing periodic points (see~\cite{BonCro:04}), leading to the impression that in this setting the basic pieces of the dynamics
are well defined. The homoclinic classes 
are the closure of an increasing sequence of hyperbolic basic sets, 
leading to a good understanding of at least a part of the dynamics contained in it. 
However, the ergodic theory inside 
homoclinic classes is in general not understood. \cite[Conjecture 2]{Bon:11} proposes that  
for $C^1$-generic diffeomorphisms the ergodic measures supported in a homoclinic class are approached by periodic orbits
contained in the class. 
In the opposite direction, in \cite{DiaGel:12, DiaGelRam:14} there can be found an example of a diffeomorphism having a nontrivial homoclinic class with a hyperbolic 
periodic orbit which is isolated from the other ergodic measure supported on the class. 
We note that this example fits into the hypothesis of Theorem~\ref{the:sinterclass}. 
But,  besides the existence of examples, it remains
\emph{a priori} unclear what general mechanism causes that the closure of the set of invariant ergodic probability measures splits 
into distinct components. 
According to our main results, 
we can now give a refined statement towards the above conjecture:   
under the assumptions of $C^1$ domination, for an individual diffeomorphism 
any ergodic hyperbolic measure can be approached only by periodic orbits contained in its  intersection class. 

\smallskip\noindent
\textbf{Acknowledgement:} We thank Lorenzo J. D\'\i az for discussion at the origin of this paper and of the examples in Section~\ref{sec:examples}. We also thank the referee whose suggestions lead us to a more conceptional and much easier definition of the intersection class of a hyperbolic ergodic measure.

\section{Preliminaries} 

\subsection{The space of measures}\label{ssec:choquet}

Let $X$ be a compact metric space and denote by $\cM(X)$ the set of Borel probability measures on $X$. It is well known that it is a compact metrizable topological space when  equipped with the weak$\ast$ topology (\cite[Chapter 6.1]{Wal:82}).  If $\{\varphi_n\}_{n\ge1}$ is a dense subset of the space $C^0(X)$ of continuous functions on $X$ then
\[
	D(\mu,\nu)
	\eqdef\sum_{n=1}^\infty \frac{1}{2^n\lVert \varphi_n\rVert_\infty} \,
		\Big\lvert \int\varphi_n\,d\mu - \int\varphi_n\,d\nu\Big\rvert,
	\quad \lVert \varphi_n\rVert_\infty
	:= \sup_{x\in\Lambda}\,\lvert\varphi_n(x)\rvert	
\]
provides a metric on this space giving the weak$\ast$ topology.  We will use the fact that in the weak$\ast$ topology $\mu_n\to\mu$ if, and only if, for every $\varphi\in C^0(X)$ we have $\int\varphi\,d\mu_n\to\int\varphi\,d\mu$. In particular, we can consider $\cM(\cM(X))$. If $\blambda\in\cM(\cM(X))$, then $\mu=\int\nu\,d\blambda(\nu)$ is a well defined element in $\cM(X)$ and we call $\blambda$ a \emph{decomposition} of $\mu$.

If $f\colon X\to X$ is a continuous map on $X$, denote by $\cM_f(X)\subset\cM(X)$ the subspace of all $f$-invariant Borel probability measures on $X$ and  by $\cM_{\rm erg}(X)\subset\cM_f(X)$ the subset of $f$-ergodic measures. Recall that $\cM_f(X)$ is a non-empty Choquet simplex (see~\cite[Chapter 6.2]{Wal:82}). In particular, it is convex and compact. The extreme points of $\cM_f(X)$ are the ergodic measures. Any point in a Choquet simplex is represented by a unique probability measure on the extreme points -- this point of view is often taken to show the ergodic decomposition of nonergodic measures. Given $\mu\in\cM_f(X)$, let $\blambda\in\cM(\cM(X))$ be the \emph{$f$-ergodic decomposition} of $\mu$, that is, the unique decomposition of $\mu$ such that $\blambda(\cM_{\rm erg}(X))=1$.

We will use the following straightforward result.

\begin{lemma}\label{lem:finiteapprox}
	Let $\mu\in\cM_f(X)$ and let $\blambda\in\cM(\cM(X))$ be its $f$-ergodic decomposition.
Then for any neighborhood $V$ of $\mu$ in $\cM_f(X)$, there exist $\nu_1,\ldots,\nu_k\in\cM_{\rm erg}(X)$ and positive numbers $\lambda_1,\ldots,\lambda_k$ satisfying $\sum_{j=1}^k\lambda_j=1$ such that $\sum_{j=1}^k\lambda_j\nu_j\in V$.
\end{lemma}

\begin{proof}
See, for example,~\cite[Lemma 2.1]{AviBoc:12}. Indeed, the only difference is that we require $\nu_j$ to be ergodic.  Note that in the proof in~\cite{AviBoc:12} one can take $y_i$  ergodic.
\end{proof}

\subsection{Hyperbolicity and dominated splitting}\label{sec:perhypdom}

Now let $f\colon M \to M$ be a $C^1$-diffeomorphism of a Riemannian manifold, and let $ \Lambda \subset X$ be a compact invariant set.
By Oseledets' multiplicative ergodic theorem, given $\mu\in\cM_f(\Lambda)$, for $\mu$-almost every $x$ there are a positive integer $s(x)\leq \dim M$, a  $df$-invariant splitting
\[
T_xM = \bigoplus_{i=1}^{s(x)}E^i_x, 
\]
and numbers $\chi_1(x)<\ldots<\chi_{s(x)}(x)$, called the \emph{Lyapunov exponents} of $x$, such that for all $i=1,\ldots,s(x)$ and $v\in E^i_x\setminus\{0\}$ we have
\[
	\chi_i(x)
	= \lim_{\lvert n\rvert\to\infty}\frac 1n\log\,\lVert df^n_x(v)\rVert.	
\]
We call such a point $x$ \emph{Lyapunov regular} with respect to $f$ (see for example~\cite{Man:87} for details on Lyapunov regularity). Moreover, $\chi_i(\cdot)$ are $\mu$-measurable functions and we denote by
\[
	\chi_i(\mu)\eqdef\int\chi_i(x) d\mu(x).
\]
the Lyapunov exponents of the measure $\mu$ (observe that we allow $\mu$ to be nonergodic).
For a Lyapunov regular point $x$ let us denote by $E^s_x$ (by $E^u_x$) the span of all subspaces of $T_xM$ that correspond to a negative Lyapunov exponent (a positive Lyapunov exponent).
The \emph{stable index} or simply \emph{$s$-index} (\emph{unstable index} or \emph{$u$-index}) 
of a Lyapunov regular point $x$ is the dimension of  $E_x^s$ (of $E_x^u$).

We say that $\mu$ is \emph{hyperbolic} if for $\mu$-almost every $x$ there is $1\le \ell=\ell(x)<s(x)$ such that
\[
	\chi_\ell(x)<0<\chi_{\ell+1}(x)
\]
(there are negative and positive but no zero Lyapunov exponents).
If $\mu$ is ergodic then $s(\cdot)$, $\chi_i(\cdot)$, as well as the dimensions of $E_{(\cdot)}^{s}$ and $E_{(\cdot)}^u$ are constant almost everywhere. Correspondingly to what is defined above, the \emph{stable index} (\emph{unstable index}) of a  hyperbolic measure is the \emph{stable index} (\emph{unstable index}) of almost every Lyapunov regular point.   

An $f$-invariant set $\Gamma\subset M$ is \emph{hyperbolic} if there exists a $df$-invariant  splitting $E^s\oplus E^u=T_\Gamma M$ of the tangent bundle and positive constants $C$ and $\lambda$ such that for every $x\in\Gamma$ for every $n\ge0$ we have
\[\begin{split} 
	\lVert df^n_x(v)\rVert&\le Ce^{n\lambda}\lVert v\rVert 
	\quad\text{for all}\quad v\in E^s_x,\\
	\lVert df^{-n}_x(w)\rVert&\le Ce^{n\lambda}\lVert w\rVert 
	\quad\text{for all}\quad w\in E^u_x	\,.
\end{split}\]
Recall that for a compact $f$-invariant hyperbolic set, up to a smooth change of metric, we can assume  $C=1$. 

A set $\Gamma\subset M$ is \emph{locally maximal} if there exists an open neighborhood 
$U$ of $\Gamma$ such that $\Gamma=\bigcap_{k\in\mathbb Z}f^k(U)$. 
A set $\Gamma\subset M$ is \emph{transitive} if it is the closure of a positive orbit.
Recall that a set  is \emph{basic} (with respect to $f$) if it is compact, invariant, transitive, locally maximal, and hyperbolic. 

Given an $f$-invariant set $\Gamma$, a $df$-invariant splitting $T_\Gamma M=E\oplus F$  is \emph{dominated} if  
there exists $N\ge1$ such that for every point $x\in \Gamma$ and all unit vectors $v\in E_x$ and $w\in F_x$ we have
\[
	\lVert df^N_x(v)\rVert \le \frac 12 \,\lVert df^N_x(w)\rVert
\]
and if $\dim E_x$ (and hence $\dim F_x$) does not depend on $x\in\Gamma$. 
It will be denoted by $E\oplus_{_<}F$. 

Recall that a dominated splitting $E\oplus_{_<} F$ is always continuous and extends to the closure $\overline\Gamma$ of $\Gamma$. Moreover, for a sufficiently small neighborhood $V$ of $\overline\Gamma$, considering the maximal invariant set $\tilde\Gamma$ in $V$, there is a unique dominated splitting on $\tilde\Gamma$ which extends $E\oplus_{_<}F$ (see~\cite{BonDiaVia:05} for more details). 
We call such $\tilde\Gamma$ a \emph{dominated extension} of $(\Gamma,E\oplus_{_<} F)$. 

If $\Gamma\subset \Lambda$ is hyperbolic then its associated splitting $E^s\oplus E^u=T_\Gamma M$ is dominated.

Given a dominated splitting $E\oplus_{_<} F$, it may happen that the bundles $E$ and $F$ can be further decomposed into subbundles satisfying a domination condition. There always exists a (unique) \emph{finest dominated sub-splitting} in the sense that it is not further decomposable. 
For an ergodic measure $\mu\in\cM_f$, the bundles of any dominated  splitting on the support of $\mu$ 
can be written, for $\mu$-generic points, as sums of the Oseledets spaces of groups of increasing Lyapunov exponents. 
We will, however, in this paper disregard such finer splittings. We only consider a dominated
splitting into two bundles which separates positive and negative exponents alone. These subbundles are denoted $E^s$ and $E^u$, respectively, and 
if this splitting is dominated we say that \emph{the stable/unstable splitting of $\mu$ is dominated}. 

We note that, if $E\oplus_{_<}F$ is a dominated splitting defined over $\supp\mu$ which has the same $s$-index such as $\mu$ then the stable/unstable splitting $E^s\oplus E^u$ over $\supp\mu$ is dominated.

\subsection{Subadditive and maximal ergodic theorems}\label{subsec:submaxergthe}

Consider a dominated splitting $E\oplus_{_<} F=T_\Lambda M$. 
Note that the sequence  $(\psi_n)_n$ of functions $\psi_n\colon\Lambda\to\bR$ given by 
\[
	\psi_n(x)
	\eqdef \log\,\lVert df^n_{/E_x}\rVert
\]
is \emph{subadditive}, that is, for every $n,m\ge1$ we have $\psi_{n+m}\le\psi_n+\psi_m\circ f^n$. 
Observe  that the sequence $(-\phi_n)_n$ of functions $-\phi_n\colon\Lambda\to\bR$, where $\phi_n$ are given by
\[
	\phi_n(x)
	\eqdef \log\,\lVert (df^n_{/F_x})^{-1}\rVert^{-1}
	= \log\,\lVert df^{-n}_{/F_{f^n(x)}}\rVert^{-1}
\]
is also subadditive.
By Kingman's subadditive ergodic theorem, there exist measurable functions $\lambda_E,\lambda_F\colon\Lambda\to\bR$ such that for a given measure $\mu\in\cM_f(\Lambda)$ for $\mu$-almost every $x$ we have
\[
	\lambda_E(x)
	 = \lim_{n\to\infty}\frac1n\log\,\lVert df^n_{/E}\rVert,
	 \quad
	\lambda_F(x)
	 = \lim_{n\to\infty}\frac1n\log\,\lVert (df^n_{/F})^{-1}\rVert^{-1}
\]
and are called the \emph{maximal Lyapunov exponent} of $x$ in $E$ and the \emph{minimal Lyapunov exponent} of $x$ in $F$, respectively.
(For this note that in our setting the sequences $(\psi_n/n)_n$ and $(\phi_n/n)_n$ are uniformly bounded from below.) 
Moreover, 
\begin{equation}\label{eq:maxminexp}
\begin{split}
	\lambda_E(\mu)
	& \eqdef 
	\int\lambda_E\,d\mu
	= \lim_{n\to\infty}\int\frac1n\psi_n\,d\mu 
	= \inf_{n\ge1}\int\frac1n\psi_n\,d\mu \\
	\lambda_F(\mu)
	& \eqdef 
	\int\lambda_F\,d\mu
	= \lim_{n\to\infty}\int\frac1n\phi_n\,d\mu 
	= \sup_{n\ge1} \int\frac1n\phi_n\,d\mu
\end{split}
\end{equation}
and we call these numbers the \emph{maximal Lyapunov exponent of $\mu$ in $E$} and the \emph{minimal Lyapunov exponent of $\mu$ in $F$}, respectively. 
If $\mu$ is ergodic, then $\lambda_E$ and $\lambda_F$ are constant $\mu$-almost everywhere.

Given $N\ge1$, define the function $\psi_N^\ast\colon\Lambda\to\bR$ by
\[
	\psi_N^\ast(x)
	\eqdef \sup_{k\ge1}\frac1k\sum_{\ell=0}^{k-1}\psi_N(f^{\ell N}(x)).
\]
Let
\[
	B_N
	\eqdef \left\{x\colon \psi_N^\ast(x)\le -1\right\}.
\]
Recall that, given $\mu\in\cM_f$,
by the maximal ergodic theorem applied to the Birkhoff averages (with respect to $f^N\colon\Lambda\to\Lambda$) of the potential $\psi_N$ we have
\[
	\int_{\Lambda\setminus B_N}\left(\psi_N+1\right)\,d\mu
	\ge0
\]
which implies
\begin{equation}\label{eq:maxergthe}
	- \int_{\Lambda\setminus B_N}\psi_N\,d\mu
	\le  \mu(\Lambda\setminus B_N).
\end{equation}

\section{$C^1$-dominated Pesin theory}
\label{sec:domPis}

The usual Pesin theory requires a $C^{1+\alpha}$ regularity of the dynamics (\cite{Pug:84,BonCroShi:13}  
show that the Pesin theory does not hold in the $C^1$-setting).   
We use here the $C^1$-dominated Pesin theory developed in~\cite{AbdBonCro:11,AviBoc:12}, which  holds 
only when the stable/unstable bundle
is dominated. From this theory, we derive  in particular the size of local stable/unstable manifolds. 

\subsection{Invariant manifolds in the $C^1$  dominated setting}

Let $f$ be a $C^1$ diffeomorphism of a Riemannian manifold $M$. The \emph{(Pesin) stable  set}  at a point $x\in M$ is defined by
\[
	W^s(x)\eqdef 
	\{y\in M\colon \limsup_{n\to\infty}\frac1n\log d(f^n(y),f^n(x))<0\}\,.
\]
The \emph{(Pesin) unstable  set} at $x$ is defined to be the stable  set at 
$x$ with respect to $f^{-1}$ and denoted by $W^u(x)$.

In this section we continue to assume that $\Lambda\subset M$ is a compact $f$-invariant set and that its tangent bundle carries a dominated splitting $T_\Lambda M=E\oplus_{_<} F$. 
We fix cone fields $\cone^E,\cone^F$ around the  bundles $E,F$ which are strictly invariant. 
More precisely, for every $x\in\Lambda$ the open cone $\cone^E_x\subset T_xM$ contains $E_x$, is 
transverse to $F_x$, and the image of its closure under $df_x$ is contained in $\cone^E_{f(x)}$. 
Analogously we define $\cone^F_x$ being invariant with respect to $df_x^{-1}$. Such cone fields 
can be extended to cone fields on a small open neighborhood of $\Lambda$, keeping all the above 
given properties. 

Given $x\in \Lambda$ and $\delta>0$, we say that a set $D$ is a $C^1$ \emph{stable disk} of radius 
$\delta$ centered at $x$ if there is a $C^1$ map $\varphi$ from a ball of radius $\delta$ centered 
in $0$ in $E_x$ to $F_x$ such that $D$ is the graph of the map $v\mapsto \exp_x(v+\varphi(v))$ and 
so that the tangent space of $D$ at each point  is tangent to the cone field $\cone^E$. 
Analogously, we define a $C^1$ \emph{unstable disk}.  

\begin{definition}[$\delta_0$]\label{def:plaques}
By the Plaque family theorem (see~\cite[Theorem 5.5]{HirPugShu:77} or~\cite{AbdBonCro:11}) there exist
some $\delta_0$ and a continuous family of $C^1$ stable (unstable) disks  centered at points $x\in\Lambda$ 
of radius $\delta_0$ which is locally  invariant. Moreover, by choosing $\delta_0$ small 
enough, one can ensure that these disks are tangent to the cone field $\cone^E$ ($\cone^F$). 
By shrinking if necessary the size of the plaques one may assume 
that every stable plaque is transverse to any unstable
plaque at any point of intersection.

For the following we will fix such families and denote them by 
$\{D^E_x\}_{x\in\Lambda}$ and $\{D^F_x\}_{x\in\Lambda}$.
\end{definition}

In the whole paper, when we consider a set $\Lambda$ (or $\tilde\Lambda$) 
with a dominated splitting $E\oplus_{_<}F$, we always endow it implicitly with a continuous family of
locally invariant  plaques.  We say that a point $x\in\Lambda$ has a \emph{(local) stable} (\emph{unstable}) \emph{manifold of size
$\delta$} if there is a disk $D$ of radius $\delta$ centered at $x$ in the plaque $D_x^E$ ($D^F_x$) and 
contained in the stable set of $x$.

The following lemma is an immediate consequence of the transversality of the stable and unstable plaques families $\{D^E_x\}_{x\in\Lambda}$ and 
$\{D^F_x\}_{x\in\Lambda}$ and will be often used in this paper.

\begin{lemma}\label{lem:prostru}
For every $\delta>0$ there exists $\eta>0$ such that for every pair of points 
$x,y\in\Lambda$ which satisfy $d(x,y)\le\eta$ and which both have $C^1$  stable and unstable 
disks of radius at least $\delta$ these disks intersect transversally and cyclically, that is, 
the stable disk of $x$ intersects the unstable disk of $y$ and vice versa.
\end{lemma}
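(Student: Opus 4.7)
The plan is to reduce the statement to a uniform geometric intersection lemma for Lipschitz graphs over two strictly transverse families of subspaces. The structural input is already packaged in the hypotheses: the plaques are tangent to the cone fields $\cone^E,\cone^F$, and these cone fields are continuous, strictly invariant and transverse on a neighborhood of the compact set $\Lambda$. The dynamical content of the Pesin theory is therefore not needed at this step; only the geometry of the plaques matters.

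First, I would extract a uniform transversality constant. By compactness and continuity, there is $\alpha_0>0$ such that for every $z$ in a neighborhood of $\Lambda$, any $v\in\cone^E_z$ and $w\in\cone^F_z$ meet at angle at least $\alpha_0$. Consequently, in the exponential chart at a point $z\in\Lambda$ corresponding to the splitting $T_zM=E_z\oplus F_z$, any $C^1$ disk of radius $\delta$ tangent to $\cone^E_z$ is a graph of a $C^1$ map from the ball $B_\delta(0)\subset E_z$ into $F_z$, with slope bounded by a uniform constant $L=L(\alpha_0)$; after shrinking the cones slightly around $E$ and $F$ (which does not affect the hypothesis) I may assume $L<1$. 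The analogous statement holds for unstable disks.

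Next, given $x,y\in\Lambda$ with $d(x,y)\le\eta$ and local stable and unstable disks of radius at least $\delta$, I would work in the exponential chart at $x$. The stable disk $D^s_x$ reads as the graph of some $\varphi\colon B_\delta(0)\subset E_x\to F_x$ with $\mathrm{Lip}(\varphi)\le L$. By continuity of the bundles $E,F$ and smoothness of the exponential map on a fixed neighborhood of $\Lambda$, for $\eta$ small enough (depending on $\delta$) the unstable disk $D^u_y$ can be written in the same chart as the graph of a map $\psi\colon B_{\delta/2}(0)\subset F_x\to E_x$ with Lipschitz constant at most some $L'<1$, translated by a vector of norm at most $C\eta$ for a geometric constant $C$ determined on $\Lambda$.

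Finding a point of $D^s_x\cap D^u_y$ then amounts to solving $v+\varphi(v)=y+w+\psi(w)$ in $E_x\oplus F_x$, i.e.\ a fixed point of the map $(v,w)\mapsto (y_{E}+\psi(w),\varphi(v)-y_{F})$, where $y=y_E+y_F$ is the decomposition of $y$ in the chart. With both slopes strictly less than one and translation at most $C\eta$, a standard contraction argument on a ball of radius $\delta/4$ yields a unique fixed point as soon as $\eta\le\eta(\delta)$ for a suitable $\eta(\delta)>0$, giving a single intersection point; it is automatically transverse because the tangent spaces lie in the disjoint cones $\cone^E$ and $\cone^F$. Exchanging the roles of $x$ and $y$ produces, in the exact same way, a transverse intersection point in $D^s_y\cap D^u_x$, which is the claimed cyclic intersection. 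The only real obstacle is bookkeeping two simultaneous perturbations—the change from $\exp_y$ to $\exp_x$ and the variation of the bundles $E,F$ between $x$ and $y$—but both are controlled uniformly on $\Lambda$ by continuity and can be absorbed into the constant $C$ above, after which the graph-transform argument is routine.
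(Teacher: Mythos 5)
Your proof is correct and supplies the details behind what the paper simply calls ``an immediate consequence of the transversality of the stable and unstable plaque families''---the paper offers no proof at all, and the uniform graph/contraction argument in an exponential chart at $x$ is exactly the standard geometric mechanism one would invoke. One small technical infelicity: you cannot literally ``shrink the cones'' while keeping the hypothesis, since the plaques $D^E_x,D^F_x$ are tangent to the \emph{given} cone fields $\cone^E,\cone^F$, not to a narrowed replacement. This is harmless, though, because what the (two-step) contraction $(v,w)\mapsto(c_E+\psi(w),\varphi(v)-c_F)$ really needs is that the \emph{product} of the Lipschitz constants satisfies $L_\varphi L_\psi<1$, and this already follows from the strict transversality and continuity of the two cone fields on the compact set $\Lambda$, without any shrinking. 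With that rephrasing, the argument is complete and self-contained.
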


\subsection{Size of invariant manifolds}

The sets and functions considered below are the main ingredients in \cite{AbdBonCro:11,AviBoc:12} for controlling the size of the invariant manifolds. Here we favor the more direct and flexible (though essentially equal) approach in~\cite[Sections 4.2.1--4.2.2]{AviBoc:12} since to determine the sizes of stable/unstable manifolds it does not require to verify certain quantifiers on a full measure set of points with respect to some (ergodic) measure (compare~\cite[Proposition 8.9]{AbdBonCro:11}).

Given  a positive integer $N$, define the \emph{stable Pesin block}
\[
	B^s(N,f)
	\eqdef \left\{x\in\Lambda\colon\sup_{k\ge1}\frac1k
		\prod_{\ell=0}^{k-1}\log\,\lVert df^N_{/E_{f^{\ell N}(x)}}\rVert
		\le -1\right\}.
\]
Also define by $B^u(N,f)\eqdef B^s(N,f^{-1})$ the \emph{unstable Pesin block}.

\begin{proposition}[{\cite[Theorem 4.7]{AviBoc:12}}]\label{pro:AB}
	Let $f$ be a $C^1$ diffeomorphism, $\Lambda$ a compact invariant set with a dominated splitting $E\oplus_{_<}F$. For every $N\ge 1$ there exists $\delta>0$ such that for every $x\in  B^s(N,f)$ there exists an injectively immersed $C^1$-manifold $W^E(x)$ of dimension $\dim E$ tangent to $E_x$, which is a local stable manifold $W^E(x)\subset W^s(x)$ of size $\delta$. The analogous result holds for $B^u(N,f)$.
\end{proposition}

\subsection{Approximation of hyperbolic measures}

We derive the following consequence from Proposition~\ref{pro:AB} about hyperbolic measures and their neighborhoods.

\begin{proposition}\label{pro:ABapprox}
Assume that the Standing hypothesis is satisfied.
	Let $\mu\in\cM_f(\Lambda)$ be hyperbolic with $s$-index $\dim E$.
	Then for every $\varepsilon\in(0,1)$ 	there exist  a positive integer $N$ and a number $\varkappa>0$ such that for any $\nu\in\cM_f(\tilde\Lambda)$ satisfying $D(\nu,\mu)<\varkappa$ we have $\nu(B^s(N,f)\cap B^u(N,f))>1-\varepsilon$.
Moreover, any ergodic $\nu\in\cM_f(\tilde\Lambda)$  satisfying $D(\nu,\mu)<\varkappa$ is hyperbolic with $s$-index $\dim E$ and satisfies $\lambda_E(\nu)<\lambda_E(\mu)/4$ and $\lambda_F(\mu)/4<\lambda_F(\nu)$.	 
\end{proposition}

\begin{proof}
The proof follows closely arguments in~\cite[Sections 4.2.1--4.2.2]{AviBoc:12} invoking the subadditive and maximal ergodic theorems which we recalled in Section~\ref{subsec:submaxergthe}. 

Let $\psi_n=\log\,\lVert df^n_{/E}\rVert$ and observe that $(\psi_n)_n$ is a subadditive sequence of continuous functions which are uniformly bounded from below.  By Kingman's subadditive ergodic theorem, $\psi_n/n$ converges $\mu$-almost everywhere to a measurable function $\lambda_E$. Since $\mu$ is hyperbolic with $s$-index $\dim E$, $\lambda_E(x)$ is negative at $\mu$-almost every $x$.
	
Let $\varepsilon\in(0,1)$. There exist $\lambda>0$ and a measurable set $\Omega\subset\Lambda$ such that $\mu(\Omega)>1-\varepsilon$ and such that for every $x\in \Omega$ we have $\lambda_E(x)\le -\lambda$.
	
Let $\gamma\in(0,\lambda)$ and $\chi=\lambda-\gamma$. Note that $\lambda_E+\chi<0$ on $\Omega$.
		
By the dominated convergence theorem, $\lambda_E$ is integrable and 
\[
	\lim_{n\to\infty}\int\Big\lvert \frac{\psi_n}{n}-\lambda_E\Big\rvert\,d\mu
	=0.
\]	
Hence, denoting $a^+=\max\{0,a\}$, we have
\[
	\int \Big\lvert \frac{\psi_n}{n}-\lambda_E\Big\rvert\,d\mu
	\ge \int \Big(\frac{\psi_n}{n}-\lambda_E\Big)^+\,d\mu
	\ge0
\]
which implies 
\begin{equation}\label{eq:conclude}
	\lim_{n\to\infty}\int \Big(\frac{\psi_n}{n}-\lambda_E\Big)^+\,d\mu=0.
\end{equation}
Moreover, since $\lambda_E+\chi<0$ on $\Omega$, we can conclude that for any $x\in\Omega$ we have
\[
	0
	\le \Big(\frac{\psi_n}{n}+\chi\Big)^+(x)
	=  \Big(\Big(\frac{\psi_n}{n}-\lambda_E\Big)+(\lambda_E+\chi)\Big)^+(x)
	\le \Big(\frac{\psi_n}{n}-\lambda_E\Big)^+(x),
\]
while for $\mu$-almost every $x$ in the bigger set $\Lambda$, since $ \lambda_E(x)<0$ we have 
\[
	0
	\le \Big(\frac{\psi_n}{n}+\chi\Big)^+(x)
	=  \Big(\Big(\frac{\psi_n}{n}-\lambda_E\Big)+(\lambda_E+\chi)\Big)^+(x)
	\le \Big(\frac{\psi_n}{n}-\lambda_E\Big)^+(x)+\chi.
\]
Moreover, since $\mu(\Lambda\setminus\Omega)\le\varepsilon$ we obtain
\[\begin{split}
	0\le \int \Big(\frac{\psi_n}{n}+\chi\Big)^+\,d\mu
	&\le \chi\mu(\Lambda\setminus\Omega)
			+ \int\Big(\frac{\psi_n}{n}-\lambda_E\Big)^+\,d\mu\\
	&\le \chi\varepsilon
			+ \int\Big(\frac{\psi_n}{n}-\lambda_E\Big)^+\,d\mu		.
\end{split}\]
 Hence, with~\eqref{eq:conclude} we conclude 
\[
	0
	\le \lim_{n\to\infty}\int \Big(\frac{\psi_n}{n}+\chi\Big)^+\,d\mu
	\le \chi\varepsilon.	
\]

We now choose $N\ge1$ such that 
\begin{equation}\label{eq:choiceconstants}
	\frac{1}{N}<\chi\varepsilon
	\quad\text{ and }\quad
	\int \Big(\frac{\psi_N}{N}+\chi\Big)^+\,d\mu
	<2\chi\varepsilon.
\end{equation}

As the integrand of the integral in~\eqref{eq:choiceconstants} is a continuous function, there exists $\varkappa>0$ such that for every $\nu\in\cM_f(\tilde\Lambda)$ satisfying $D(\nu,\mu)<\varkappa$ we have
\begin{equation}\label{eq:proxmeas}
	\int \Big(\frac{\psi_N}{N}+\chi\Big)^+\,d\nu
	<3\chi\varepsilon.
\end{equation}

By the maximal ergodic theorem~\eqref{eq:maxergthe}  we have
\begin{equation}\label{eq:maxergtheproof}
	-\int_{\tilde\Lambda\setminus B^s(N,f)}\psi_N\,d\nu
	\le \nu\left(\tilde\Lambda\setminus B^s(N,f)\right).
\end{equation}
What remains is to estimate $\nu(B^s(N,f))$ for every such measure $\nu$. Observe that with~\eqref{eq:proxmeas}  
\begin{equation}\label{eq:choiceN}
	\int_{\tilde\Lambda\setminus B^s(N,f)} \Big(\frac{\psi_N}{N}+\chi\Big)^+\,d\nu
	\le \int \Big(\frac{\psi_N}{N}+\chi\Big)^+\,d\nu
	< 3\chi\varepsilon.
\end{equation}
As  $\chi=(t+\chi)-t\le(t+\chi)^+-t$ for any $t\in\bR$, with $t=\psi_N/N$ we have
\[
	\chi\nu\left(\tilde\Lambda\setminus B^s(N,f)\right)
	= \int_{\tilde\Lambda\setminus B^s(N,f)}\chi\,d\nu
	\le \int_{\tilde\Lambda\setminus B^s(N,f)}
		\Big(\Big(\frac{\psi_N}{N}+\chi\Big)^+-\frac{\psi_N}{N}\Big)\,d\nu.
\]
With~\eqref{eq:choiceN},~\eqref{eq:maxergtheproof}, the fact that $\nu$ is a probability measure, and~\eqref{eq:choiceconstants} we obtain
\[
	\chi\nu\left(\tilde\Lambda\setminus B^s(N,f)\right)
	< 3\chi\varepsilon
		+\frac1N\nu\left(\tilde\Lambda\setminus B^s(N,f)\right)	
	<4\chi\varepsilon	.
\]
From this we conclude $\nu(B^s(N,f))>1-4\varepsilon$.

Without loss of generality, we can assume that the above quantifiers and estimates  simultaneously hold also for $B^u(N,f)$. 
This proves the claim about the size of stable and unstable Pesin blocks.

To prove the claim about hyperbolicity of ergodic measures close to $\mu$, consider now also the subadditive sequence $\phi_n=-\log\,\lVert df^{-n}_{/F_{f^{-n}(x)}}\rVert$ and let us assume that above $N$ was chosen large enough such that we have
\[
	\lambda_E(\mu)
	\le\int\frac{\psi_N}{N}\,d\mu
	<\frac12\lambda_E(\mu)
	,\quad
	\frac12\lambda_F(\mu)
	\le\int\frac{\phi_N}{N}\,d\mu
	<\lambda_F(\mu).
\]
By our assumption we have $\lambda_E(\mu)<0<\lambda_F(\mu)$.
As $\phi_N/N$ is continuous, we can assume that $\varkappa$ was chosen small enough such that for every $\nu\in\cM_f(\tilde\Lambda)$ satisfying $D(\nu,\mu)$ we also have 
\[
	\int\frac{\psi_N}{N}\,d\nu<\frac14\lambda_E(\mu)
	<0<
	\frac14\lambda_F(\mu)
	\le\int\frac{\phi_N}{N}\,d\mu.
\]
Then by the subadditive ergodic theorem~\eqref{eq:maxminexp}, for every ergodic measure $\nu$ satisfying $D(\nu,\mu)<\varkappa$ we obtain
\[
	\lambda_E(\nu)
	<\frac14\lambda_E(\mu)
	<0<
	\frac14\lambda_F(\mu)
	<\lambda_F(\nu).
\]
Thus, $\nu$ is hyperbolic with $s$-index $\dim E$ with Lyapunov exponents uniformly bounded away from $0$.
\end{proof}
The following is an immediate consequence of Proposition~\ref{pro:ABapprox}.

\begin{corollary}\label{corpro:allperiodicarelarge}
	Assume that the Standing hypothesis is satisfied. Let $\mu\in\cM_f(\Lambda)$ be    hyperbolic with $s$-index $\dim E$. Then for every $\varepsilon\in(0,1)$ there exist positive numbers  $\delta>0$ and $\varkappa>0$ such that	
for every invariant measure $\nu\in\cM_f(\tilde\Lambda)$ supported on the orbit of a periodic point $p$ and satisfying $D(\nu,\mu)<\varkappa$ there is a set of points $G(p)$ on this orbit  satisfying
\begin{itemize}
\item $\card G(p)\ge (1-\varepsilon)\pi(p)$, where $\pi(p)$ denotes the period of $p$;
\item  every $x\in G(p)$  has a local stable and a local unstable manifold of size $\delta$, respectively.
\end{itemize}
\end{corollary}

\begin{proof}
Taking $N=N(\mu,\varepsilon)$ and $\varkappa=\varkappa(\mu,\varepsilon)$ as in Proposition~\ref{pro:ABapprox} and $\delta=\delta(N)$ as in Proposition~\ref{pro:AB}, it suffices to observe that if $G$ is a Borel set such that $\nu(G)> 1-\varepsilon$ then with $G(p)=G\cap\{p,f(p),\ldots,f^{\pi(p)-1}(p)\}$ we have $\card G(p)>(1-\varepsilon)\pi(p)$.
\end{proof}

We will finally formulate the following slightly strengthened version of~\cite[Proposition 1.4]{Cro:11} (and of Proposition~\ref{pro:ABapprox}) which is contained in its proof in~\cite{Cro:11}.
This is an ersatz to Katok's horseshoe construction (see~\cite[Supplement S.5]{KatHas:95}) in the $C^1$ dominated setting.

\begin{corollary}\label{corpro:sylvnew}
	Assume that the Standing hypothesis is satisfied. Let $\mu\in\cM_{\rm erg}(\Lambda)$ be hyperbolic with $s$-index $\dim E$.

	Then for every $\varepsilon\in(0,1)$ there exist $\delta>0$ and $\varkappa>0$  such that for every $\nu\in\cM_{\rm erg}(\tilde\Lambda)$ satisfying $D(\nu,\mu)<\varkappa$ there exists a set $\Gamma_\nu$ such that $\nu(\Gamma_\nu)>1-\varepsilon$ and that for every point $x\in\Gamma_\nu$ there is a sequence $(p_n)_n\subset\tilde\Lambda$ of hyperbolic periodic points with $s$-index $\dim E$ such that:
\begin{itemize}
\item $p_n$ converges to $x$ as $n\to\infty$;
\item the orbit of $p_n$ converges to the support of $\nu$ in the Hausdorff topology;
\item the invariant measures supported on the orbit of $p_n$ converge to $\nu$ in the weak$\ast$ topology;
\item every $p_n$ has a stable and a unstable local manifold of size $\delta$, respectively. 
\end{itemize}
As a consequence, if $\varkappa$ was chosen sufficiently small, all points $p_n$ whose corresponding measures $\nu_n$ satisfy $D(\nu_n,\mu)<\varkappa$ are pairwise homoclinically related. 
\end{corollary}

\begin{proof}
We follow the proof of~\cite[Proposition 1.4]{Cro:11}, with essentially the only change that we use the stable and unstable Pesin blocks as in~\cite{AviBoc:12}. By Proposition~\ref{pro:ABapprox} applied to $\mu$ and $\varepsilon$,  there are a positive integer $N=N(\mu,\varepsilon)$ and a number $\varkappa=\varkappa(\mu,\varepsilon)>0$   such that for every ergodic measure $\nu\in\cM_f(\tilde\Lambda)$ with $D(\nu,\mu)<\varkappa$ we have $\nu(B^s(N,f)\cap B^u(N,f))>1-\varepsilon$. Let $\delta_0=\delta(N)>0$ as in Proposition~\ref{pro:AB} applied to the set $\tilde\Lambda$. Recall that for every $x\in B^s(N,f)\cap B^u(N,f)$ for every $k\ge0$ we have
\[
	\prod_{\ell=0}^{k-1}\lVert df^N_{/E_{f^{\ell N}(x)}}\rVert \le e^{-k}
	\quad\text{ and }\quad
	\prod_{\ell=0}^{k-1}\lVert df^{-N}_{/F_{f^{-\ell N}(x)}}\rVert \le e^{-k}	.
\]
As $\nu$ is assumed to be ergodic, $\nu$-almost every point is recurrent. Hence,  since $\nu(B^s(N,f)\cap B^u(N,f))> 1-\varepsilon>0$ there is a set $\Gamma_\nu\subset B^s(N,f)\cap B^u(N,f)$ 
satisfying $\nu(\Gamma_\nu)>1-\varepsilon$ formed by points $x$ in the support of $\nu$ that have positive iterates $f^m(x) \in B^s(N,f) \cap B^u(N,f)$ arbitrarily close to $x$ such that
the (noninvariant) atomic measure $\frac1m\sum_{i=1}^{m-1}\delta_{f^i(x)}$ is arbitrarily close to $\nu$. In particular,  for every $k=0,\ldots,m$ one has
\begin{equation}\label{eq:syleq}
	\prod_{\ell=0}^{k-1}\lVert df^N_{/E_{f^{\ell N}(x)}}\rVert \le e^{-k}
	\quad\text{ and }\quad
	\prod_{\ell=0}^{k-1}\lVert df^{-N}_{/F_{f^{m-\ell N}(x)}}\rVert \le e^{-k}	.
\end{equation}
This property and the domination $E\oplus_{<} F$ allow to apply Liao-Gan's shadowing lemma~\cite{Gan:02} which states that the orbit segment $x,f(x),\ldots,f^m(x)$ is $c$-shadowed by a periodic orbit $p,f(p),\ldots,f^m(p)=p$ where $c$ tends to $0$ as $d(x,f^m(x))$ decreases. In particular the atomic measure on this periodic orbit is close to $\nu$. Note that the periodic point $p$ satisfies an estimate similar to~\eqref{eq:syleq} up to some multiplicative constants which only depend on $c$ and $N$ (but not on $\nu$) and which tend to $1$ as $d(x,f^m(x))$ decreases. 
This way, we obtain a sequence $(p_n)_n\subset\tilde\Lambda$ of hyperbolic periodic points with $s$-index $\dim E$ which converge to $x$, whose orbit converge to $\supp\nu$, and whose measures converge to $\nu$ in the weak$\ast$ topology. And by the estimates~\eqref{eq:syleq} and arguments similar as in the proof of Proposition~\ref{pro:AB} the size of the local stable and the local unstable manifolds at $p_n$ are uniformly bounded from below by some positive $\delta\in(0,\delta_0)$ (which depends on $N$ but does not depend on $\nu$). But we refrain from giving all details.
\end{proof}

\section{Intersection classes}\label{sec:intcla}

\begin{definition}[Intersection class]
Let $f$ be a diffeomorphism of a compact man\-i\-fold $M$. Let $\Per_{\rm hyp}$  be the set of hyperbolic periodic  points in $M$. On $\Per_{\rm hyp}$ we consider the relation $\sim_H$ of being \emph{homoclinically related}, that is, 
two hyperbolic periodic  points $p\sim_H q$ if, and only if, the invariant manifolds of 
their orbits meet cyclically and transversely. 
This  defines an equivalence relation on the set $\Per_{\rm hyp}$. The equivalence classes for $\sim_H$ are
called \emph{the intersection classes}.%
\footnote{Note that we do not consider the closure of the set of the transverse intersection points. To avoid confusion with other common usage of terms we avoid to call such an equivalence class a \emph{homoclinic class}, although this would be an appropriate name.}
\end{definition}

\begin{remark} \label{rem:homcla}
For every hyperbolic periodic point, its \emph{homoclinic class} is the closure of its intersection class.

 A given homoclinic class $C$ of a hyperbolic periodic point $p$ may contain several distinct intersection classes. For instance, $C$ may contain points of
$s$-indices different from the  $s$-index of $p$. More surprisingly, $C$ may contain hyperbolic periodic points $q$ with the same $s$-index as $p$
but not homoclinically related with $p$: \cite{DiaHorRioSam:09,DiaGel:12} (see also \cite{LepOliRio:11}) 
provide such examples where the homoclinic class of $q$ is strictly contained in $C$. 
\end{remark}

For the remainder of this section, we assume that the Standing hypothesis is satisfied.

\begin{definition}[Intersection class of a hyperbolic ergodic measure]\label{def:intcla}
For any hyperbolic ergodic  measure $\mu$ with $s$-index $\dim E$ supported in $\Lambda$, Corollary~\ref{corpro:sylvnew} in particular  
asserts that for $\mu$-almost every $x$ there exists a sequence $(p_n)_{n\ge1}$ 
of hyperbolic periodic  points $p_n\in\tilde \Lambda$ with $s$-index $\dim E$ such that  $\lim_{n\to\infty}p_n=x$.
On the other hand, Theorem~\ref{t.homoclinic} asserts that there is a neighborhood of $\mu$ in the set of invariant measures supported in $\tilde \La$ so that
every pair of periodic orbits in $\tilde \La$ whose corresponding invariant measures are contained in this neighborhood are homoclinically related, hence belong to the same class. 

Therefore, Corollary~\ref{corpro:sylvnew} together with Theorem~\ref{t.homoclinic} implies that there is exactly one intersection class containing orbits in $\tilde \La$ and whose
corresponding measures tend to $\mu$.  
We call this class \emph{the intersection class of $\mu$} and we denote it by $H(\mu)$.
\end{definition}

For Definition~\ref{def:intcla} to be coherent it remains now to prove Theorem~\ref{t.homoclinic}. 

\begin{proof}[Proof of Theorem~\ref{t.homoclinic}]
We want to prove  that, for $\varkappa=\varkappa(\mu)$ small enough, all periodic orbits in $\tilde \La$ whose corresponding measures are $\varkappa$-close to $\mu$ are homoclinically related. 
Arguing by contradiction, we assume the existence of a sequence $(\cO(p_n), \cO(q_n))_n$ of pairs of periodic orbits of periodic points $p_n,q_n\in\tilde \La$, 
whose corresponding measures $\nu_n,\xi_n$ tend to $\mu$, and so that the unstable manifold of $\cO(p_n)$ has no transverse intersection point with the stable manifold of $\cO(q_n)$. 

Recall that  for $n$ large enough $\cO(p_n)$ and $\cO(q_n)$ are hyperbolic with the same $s$-index  as $\mu$. Furthermore, Corollary~\ref{corpro:allperiodicarelarge} ensures the existence 
of numbers $\delta=\delta(\mu)>0$ and $\theta=\theta(\mu)\in(0,1)$  and of a subset  $G(p_n)\subset \cO(p_n)$ with the following properties:
\begin{itemize}
 	\item $\card(G(p_n))\ge\theta\card(\cO(p_n))$ 
		and $\card(G(q_n))\ge\theta\card(\cO(q_n))$, that is, 
		$\nu_n(G(p_n))\geq \theta$ and $\xi_n(G(q_n))\geq\theta$.
 	\item for any $x\in G(p_n)$ the local stable (and unstable) manifold of $x$ contains 
		a disk of radius $\delta$ centered at $x$, and for any $y\in G(q_n)$ 
		the local unstable (and stable) manifold contains a disk of radius $\delta$ centered at $y$. 
\end{itemize}

Let $\nu^s_n$ denote the restriction of $\nu_n$ to the set $G(p_n)$ and $\xi^u_n$ denote the restriction of $\xi_n$ to $G(q_n)$. 
Thus $\nu^s_n$ and $\xi^u_n$ are positive measures whose total mass is within the interval $[\theta,1]$.  Note that the set of such measures is  closed with respect to the weak$\ast$ topology. 

Therefore, up to considering  subsequences, one may assume that the measures $\nu^s_n$ converge to a measure $\nu^s$ and the measures $\xi_n^u$ tend to a measure $\xi^u$. 
Note that for any continuous positive function $\phi$ one has $\int \phi \,d\nu_n^s\leq \int\phi \,d\nu_n$ which implies that $\int \phi \,d\nu^s\leq\int \phi \,d\mu$.  In the same way, one deduces $\int\phi \,d\xi^u \leq \int \phi \,d\mu$. 
Thus, 
\[
	\mu(\supp\nu^s)\ge\theta 
	\quad\text{ and }\quad 
	\mu(\supp\xi^u)\ge\theta.
\]	
As $\mu$ is ergodic, there is $k\in \NN$ so that 
\[
	\mu\big(\supp\nu^s\cap  f^k(\supp\xi^u)\big)>0.
\]	 
Choose now positive numbers $\delta_0$ and $\eta$ satisfying $\delta_0<\delta\cdot(\max\|Df^{-1}\|)^{-k}$ and $\eta\ll\delta_0$. 
Consider a point $x\in \supp(\nu^s)\cap  f^k(\supp\xi^u)$ and the ball $B(x,\eta)$ of radius $\eta$ centered at $x$.  In particular $\nu^s(B(x,\eta))>0$ and $\xi^u(f^{-k}(B(x,\eta)))>0$.

For any $n$ large enough we have $\nu^s_n(B(x,\eta))>0$ and  $\xi^u_n(f^{-k}(B(x,\eta)))>0$. This means, in particular, that the set $B(x,\eta)$ contains:
\begin{itemize}
	\item  a point $y_n\in G(p_n)\subset \cO(p_n)$ 
		(with local stable manifold larger than $\delta$),
	\item  a point $f^k(z_n)$ with $z_n\in G(q_n)\subset \cO(q_n)$ 
		($z_n$ has a local unstable manifold larger than $\delta$ and therefore $f^k(z_n)$ has a unstable manifold larger than $\delta_0$).
\end{itemize}

When $\eta$ was chosen small enough, this implies that $W^s(y_n)$ cuts transversely $W^u(f^k(z_n))$. But this 
contradicts the hypothesis that $W^s(\cO(p_n))$ does not have transverse intersection points
with $W^u(\cO(q_n))$,  proving the theorem.
\end{proof}

A straightforward consequence is the following.

\begin{corollary}
	For $\mu$ satisfying the hypotheses of Corollary~\ref{corpro:sylvnew}, the closure $\overline{H(\mu)}$ of the 
	intersection class $H(\mu)$ contains the support of $\mu$.
\end{corollary}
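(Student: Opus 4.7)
The goal is to show $\supp(\mu)\subseteq\overline{H(\mu)}$, and the strategy is to produce a $\mu$-full measure set of points that are approximated by periodic orbits lying in $H(\mu)$, then invoke the characterization of $\supp(\mu)$ as the smallest closed set of full measure.

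First, I would apply Proposition~\ref{pro:sylv} along a sequence $\sigma_m\to0$ to produce sets $\Gamma_{\sigma_m}$ with $\mu(\Gamma_{\sigma_m})>1-\sigma_m$, and set
\[
	\Gamma\eqdef\bigcup_{m\ge1}\Gamma_{\sigma_m},
\]
so that $\mu(\Gamma)=1$. By Proposition~\ref{pro:sylv}, every $x\in\Gamma$ is the limit of a sequence $(p_n)_n$ of hyperbolic periodic points $p_n\in\tilde\Lambda$ of $s$-index $\dim E$, whose associated invariant measures $\nu_n$ converge to $\mu$ in the weak$\ast$ topology.

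Next, I would apply Theorem~\ref{t.homoclinic} to $\mu$ to obtain the constant $\varkappa(\mu)>0$ such that any two periodic orbits in $\tilde\Lambda$ whose associated measures lie within distance $\varkappa(\mu)$ of $\mu$ are homoclinically related, and hence belong to the same intersection class, namely $H(\mu)$ (this is precisely how $H(\mu)$ was defined). Consequently, for each $x\in\Gamma$, there is an index $n_0=n_0(x)$ such that $D(\nu_n,\mu)<\varkappa(\mu)$ for all $n\ge n_0$, and so $p_n\in H(\mu)$ for all such $n$. Taking the limit $p_n\to x$ gives $x\in\overline{H(\mu)}$, hence $\Gamma\subseteq\overline{H(\mu)}$.

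Finally, I would conclude using the standard fact that $\supp(\mu)$ is the smallest closed set of full $\mu$-measure. Since $\overline{H(\mu)}$ is closed and satisfies $\mu(\overline{H(\mu)})\ge\mu(\Gamma)=1$, it must contain $\supp(\mu)$, which is the desired conclusion. The argument is essentially a direct combination of Proposition~\ref{pro:sylv} (providing approximation by periodic orbits at $\mu$-a.e.\ point) with Theorem~\ref{t.homoclinic} (ensuring that the approximating orbits eventually lie in a common intersection class); no obstacle of substance arises once these two ingredients are in place.
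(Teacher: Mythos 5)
Your proof is correct and is precisely the intended argument: the paper labels this a ``straightforward consequence'' without giving details, and the natural derivation is exactly the one you give, namely to combine Proposition~\ref{pro:sylv} (a full-measure set of points accumulated by periodic orbits whose measures tend to $\mu$, with orbits eventually lying in $\tilde\Lambda$ since they Hausdorff-converge to $\supp\mu$) with Theorem~\ref{t.homoclinic} (those orbits eventually all lie in the single class $H(\mu)$), and then use that $\supp\mu$ is the smallest closed set of full measure.
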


\begin{remark}\label{rem:hertzens}
Finally let us point out that there exist other concepts in the literature which are related to the above defined intersection class (besides the already mentioned concept of a homoclinic class, see Remark~\ref{rem:homcla}). Generally they are, however, quite distinct objects. 

For example, a related concept has been studied in~\cite{RodRodTahUre:11} (see also~\cite{AviBoc:12}). For completeness we will provide its definition. 
In the context of $f$ being a $C^{1+\alpha}$ 
diffeomorphism, Pesin theory guarantees that for every ergodic hyperbolic  measure $\mu$ there is a Borel set $\widehat\cRR$ of full measure such that for every $x\in\widehat\cRR$ the stable and unstable sets $W^s(x)$ and $W^u(x)$ are immersed manifolds (see~\cite{BarPes:01} for details).
Let $p$ be a hyperbolic periodic   point and denote by $\cO(p)$ its orbit. Define
\[\begin{split}
	H^s_{\rm erg}(p)&\eqdef
	\big\{x\in\widehat\cRR\colon W^s(x)\text{ transversally intersects }
	 W^u(\cO(p))\big\},\\
	H^u_{\rm erg}(p)&\eqdef
	\big\{x\in\widehat\cRR\colon W^u(x)\text{ transversally intersects }
	W^s(\cO(p))\big\}\,,\\
	H_{\rm erg}(p)&=H^s_{\rm erg}(p)\cap H^u_{\rm erg}(p)\,.
\end{split}\]
In~\cite{RodRodTahUre:11}, the set $H_{\rm erg}(p)$ is called the \emph{ergodic homoclinic class} of $p$. It has been introduced to study ergodicity of smooth  measures  and it is shown in~\cite{RodRodTahUre:11} that if $f$ preserves a smooth measure $m$ and if both sets $ H^s_{\rm erg}(p)$ and $H^u_{\rm erg}(p)$ both have positive measure, then $H_{\rm erg}(p)$ coincides modulo zero sets with $H^s_{\rm erg}(p)$ and $H^u_{\rm erg}(p)$, and form an ergodic component of $m$.

In our notation, given an ergodic invariant hyperbolic probability measure $\mu$ supported in $\Lambda$ and with $s$-index $\dim $, for every periodic point $p\in\tilde\Lambda$ with invariant measure sufficiently close to $\mu$, we have
\[
	H(\mu) = H_{\rm erg}(p)\cap\Per_{\rm hyp}
\]
and this set is contained in the homoclinic class of $p$.
\end{remark}

\section{Proof of Theorem~\ref{the:sinterclass}}\label{sec:prooof}

Let $f$ be a $C^1$ diffeomorphism, $\Lambda$ be a compact invariant set with a dominated splitting $TM|_\Lambda=E\oplus_{_<}F$. We fix a maximal invariant set $\tilde \Lambda$ in a neighborhood of $\Lambda$ so that the 
 dominated splitting $E\oplus_{_<}F$ extends to $\tilde \Lambda$.
Let  $\mu\in\cM_f(\Lambda)$ is a hyperbolic invariant Borel probability measure with $s$-index $\dim E$. Let $\mu=\int\nu\,d\blambda(\nu)$ be its ergodic decomposition.

\subsubsection*{Assuming that the intersection classes coincide}

Suppose that there exists some intersection class $H\subset\tilde\Lambda$ such that $\blambda$-almost every ergodic measure $\nu$ has intersection class $H(\nu)=H$.
Given $\varepsilon\in(0,1)$, let $\varkappa=\varkappa(\mu,\varepsilon)>0$ as in Proposition~\ref{pro:ABapprox}. 
By Lemma~\ref{lem:finiteapprox}, there exist $\nu_1,\ldots,\nu_k\in\cM_{\rm erg}(\Lambda)$ and positive numbers $\lambda_1,\ldots,\lambda_k$ satisfying $\sum_{j=1}^k\lambda_j=1$ such that $D(\sum_{j=1}^k\lambda_j\nu_j,\mu)<\varepsilon$.
Applying Corollary~\ref{corpro:sylvnew} to each of these measures $\nu_j$, there is a sequence of hyperbolic periodic  points $(p_{j,n})_n$ such that the invariant measures $\nu_{j,n}$ supported on the orbit of $p_{j,n}$ converge to $\nu_j$ in the weak$\ast$ topology. By local maximality, $p_{j,n}\in\tilde\Lambda$.
Moreover, by Theorem~\ref{t.homoclinic} eventually all points $p_{j,n}$ belong to $H(\nu_j)$. 
As by hypothesis the intersection classes $H(\nu_j)$ all coincide, all periodic 
points $p_{j,n}$ are pairwise homoclinically related. Hence, for every $n\ge1$ there is a basic set $\Delta_n\subset\tilde\Lambda$ which contains 
all $p_{j,n}$, $j=1,\ldots,k$. Thus, there is a sequence  $(p_{n,m})_m$  of periodic points $p_{n,m}$ in $\Delta_n$ whose associated invariant measures 
$\eta_{n,m}$ accumulate at the measure $\lambda_1\nu_{1,n}+\cdots+\lambda_k\nu_{k,n}$. By diagonalization, there is a sequence $(\eta_{n,m_n})_n$ tending 
to $\lambda_1\nu_1+\cdots+\lambda_k\nu_k$. As $\varepsilon$ was arbitrary, it follows that $\mu$ is accumulated by ergodic measures in $\cM_f(\tilde\Lambda)$ as claimed.

\subsubsection*{Assuming existence of one accumulated convex combination}

Suppose now that $\mu$ is accumulated by a sequence $(\nu_n)_n\subset\cM_{\rm erg}(\tilde\Lambda)$ of ergodic measures. By Corollary~\ref{corpro:sylvnew}, each $\nu_n$ is accumulated by measures supported on hyperbolic periodic orbits with $s$-index $\dim E$ which are contained in $\tilde\Lambda$. Hence, without loss of generality, we can assume that  each measure $\nu_n$ is supported  on the orbit of a hyperbolic periodic point $q_n$ with $s$-index $\dim E$  in $\tilde\Lambda$.
 By Corollary~\ref{c.homoclinic}, there exists an intersection class $H\subset\tilde\Lambda$ such that eventually, for $n$ large, $H(\nu_n)=H$.

Let  $\nu\in\cM_f(\Lambda)$ be an  ergodic measure which is a typical point in the decomposition of $\mu$, that is, that $\blambda(\cU)>0$ for any neighborhood $\cU\subset\cM_f(\Lambda)$ of $\nu$. Clearly, $\nu$ is hyperbolic with $s$-index $\dim E$. We want to show that $H(\nu) = H$.
Fixing some $\varepsilon\in(0,1)$ let $\delta=\delta(\nu,\varepsilon)>0$ and $\varkappa=\varkappa(\nu,\varepsilon)>0$ as in Corollary~\ref{corpro:sylvnew}.
Consider a neighborhood $\cU\subset\cM_f(\Lambda)$ such that every $\nu'\in \cU$ satisfies $D(\nu',\nu)<\varkappa$. Let
\[
	\nu_\varkappa
	\eqdef \int_{\cU}\nu'\,d\blambda(\nu')
	\quad\text{ and }\quad
	\lambda_0
	\eqdef \blambda\left(\cU\right)
	\in (0,1].
\]
Note that $\nu_\varkappa$ is finite and inner regular%
\footnote{Recall that a finite measure $\nu$ on a metric space $X$ is \emph{inner regular} (or \emph{tight}) if for any $\varepsilon>0$ there exists a compact subset $\Omega_\varepsilon$ of $\Omega$ such that $\nu(X\setminus\Omega_\varepsilon)<\varepsilon$.}.

On the one hand, by Corollary~\ref{corpro:sylvnew}  for every ergodic measure $\nu'\in \cU$ there is a set $\Gamma_{\nu'}$ so that $\nu'(\Gamma_{\nu'})>1-\varepsilon$ and that every $x\in\Gamma_{\nu'}$ is accumulated by hyperbolic periodic points in $\tilde\Lambda$ of $s$-index $\dim E$ which have local stable and unstable manifolds of size at least $\delta$. As $\delta$ does not depend on $\nu'$, we obtain
\begin{equation}\label{Gdelta}
	\nu_\varkappa(G(\delta))
	>\varepsilon_0,\quad\text{ where }\quad
	\varepsilon_0\eqdef\lambda_0(1-\varepsilon),
\end{equation}
where $G(\delta)$ denotes the set of points $x\in\Lambda$ which are accumulated by a sequence $(p_n)_n$ of hyperbolic periodic points in $\tilde\Lambda$ of $s$-index $\dim E$ such that each $p_n$ has a local stable and a local unstable manifold of size $\delta$, respectively. 

On the other hand, let $N'=N(\mu,\varepsilon_0/5)$ and $\varkappa'=\varkappa(\mu,\varepsilon_0/5)$ be as in Proposition~\ref{pro:ABapprox} and let $\delta'=\delta(N')$ be as in Proposition~\ref{pro:AB}. 

Let $\eta=\min\{\eta(\delta),\eta(\delta')\}>0$ for numbers $\eta(\delta)$ and $\eta(\delta')$ as in Lemma~\ref{lem:prostru}. Hence for any pair of periodic points which are $\eta$-close and which both have $C^1$ stable and unstable disks of radius at least $\min\{\delta,\delta'\}$ these disks intersect transversally and cyclically. 

Let us sketch the final step of the proof. We want to argue that any periodic orbit measure $\nu_n$ sufficiently close to $\mu$ has a large fraction of points on its support which simultaneously have large stable/unstable manifolds and are $\eta/2$-close to the set $G(\delta)$ and hence are $\eta$-close to periodic points accumulating on points in $G(\delta)$ and which have large stable/unstable manifolds and hence belong to the intersection class $H(\nu)$. As their manifolds must intersect transversally and cyclically,  we can conclude that $H(\nu)=H(\nu_n)=H$.
The technical problem is that $G(\delta)$ is in general only a measurable set. To deal with weak$\ast$ approximation, we use inner regularity of $\nu_\varkappa$ and choose some compact subset $\Gamma$ of $G(\delta)$ whose characteristic function can be approximated by some continuous function $\varphi$. This will make our approximation arguments rigorous.

Recalling preliminary facts about the weak$\ast$ topology in Section~\ref{ssec:choquet}, without loss of generality, we can assume that
\[
	\cU
	= \left\{\nu'\in\cM_f(\Lambda)\colon
		\left\lvert\int\varphi_i\,d\nu'-\int\varphi_i\,d\nu\right\rvert<\varkappa_0,
		i=1,\ldots,n\right\}
\]
for some $\varkappa_0>0$ and some continuous functions $\varphi_i\colon M\to\bR$, $i=1,\ldots,n$. Let 
\[
	\Omega
	\eqdef \left\{ x\in G(\delta)\colon 
		\left\lvert \lim_{k\to\infty}\frac1k\sum_{\ell=0}^{k-1}\varphi_i(f^\ell(x)) 
			- \int\varphi_i\,d\nu\right\rvert<\varkappa_0,i=1,\ldots,n
	\right\}
\]
be some subset of the set of generic points for ergodic measures in $\cU$. This set is measurable and by~\eqref{Gdelta} we have $\nu_\varkappa(\Omega)>\varepsilon_0$. Hence, as $\nu_\varkappa$ is regular, $\Omega$ contains a compact set $\Gamma$ such that $\nu_\varkappa(\Gamma)>4\varepsilon_0/5$. Denote by $U$ the $\eta/2$-neighborhood of $\Gamma$.
Let $\varphi\colon M\to[0,1]$ be a continuous function being equal to $0$ outside $U$  and equal to $1$ inside the $\eta/4$-neighborhood of  $\Gamma$.

Consider now $\nu_n$ with $n$ sufficiently large such that 
\[
	\left\lvert \int\varphi\,d\nu_n-\int\varphi\,d\mu\right\rvert
	<\frac{\varepsilon_0}{5}.
\]
By this and the above we have
\[
	\nu_n(U)
	\ge \int\varphi\,d\nu_n
	> \int\varphi\,d\mu -\frac{\varepsilon_0}{5}.
\]
Recalling the ergodic decomposition of $\mu$ we have
\[\begin{split}
	\int\varphi\,d\mu 
	= \int\left(\int\varphi(x)\,d\nu'(x)\right)d\blambda(\nu') 
	\ge \int_{\cU}\left(\int_\Gamma 1\,d\nu'\right)d\blambda(\nu')
	= \nu_\varkappa(\Gamma)
	> \frac{4\varepsilon_0}{5}.
\end{split}\]
Hence, on one hand we obtain $\nu_n(U)> 3\varepsilon_0/5$ and so at least a $3\varepsilon_0/5$-fraction of points on the orbit of $q_n$ are $\eta/2$-close to a point in  $\Gamma\subset G(\delta)$ which by the above choices is accumulated by hyperbolic periodic points with manifolds of size $\delta$. On the other hand we had that at least a $(1-\varepsilon_0/5)$-fraction of points on its orbit have local manifolds of size at least $\delta'$. Hence, at least a $2\varepsilon_0/5$-fraction of points on this orbit have both properties.

This finishes the proof of the theorem.
\qed

\section{Examples}

Let us present some examples to which we apply our results. For that recall that the \emph{convex hull} $\ch(A)$ of a subset $A$ of a vector space is the smallest convex set containing $A$.  

In this section we will always assume that the Standing hypothesis is satisfied.

\begin{example}[Convex hull of finitely many hyperbolic ergodic measures]
Let $\mu_1$, $\ldots$, $\mu_k\in\cM_f(\Lambda)$ be $k$ ergodic hyperbolic measures with $s$-index  equal to $\dim E$. Then the convex hull of all these measures is the simplex 
\[
	\ch(\{\mu_1,\ldots,\mu_k\})
	= \Big\{s_1\mu_1+\cdots+s_k\mu_k\colon 
			s_1,\ldots,s_k\ge0,\sum_{j=1}^ks_j=1\Big\}. 
\]
It is an immediate consequence of Proposition~\ref{pro:ABapprox} that for every $\varepsilon\in(0,1)$ there are positive numbers $\varkappa$,  $\lambda$, and $\delta$ with the following property:
Every ergodic $\nu\in \cM_f(\tilde \Lambda)$ which satisfies $D(\nu,\mu)<\varkappa$ for some $\mu\in\ch(\{\mu_1,\ldots,\mu_k\})$ is hyperbolic with $s$-index $\dim E$ and  the maximal Lyapunov exponent of $\nu$ in $E$ and the minimal Lyapunov exponent  of $\nu$ in $F$ are bounded away from $0$ by $-\lambda$ and $\lambda$, respectively. Moreover, together with Proposition~\ref{pro:AB}, for every such measure $\nu$ there exists a set of points $\Omega_\nu\subset\tilde\Lambda$ satisfying $\nu(\Omega_\nu)>1-\varepsilon$ such that every $x\in\Omega_\nu$ has stable and unstable local manifolds of size at least $\delta$. Finally, by Theorem~\ref{the:sinterclass}, any $\mu\in\ch(\{\mu_1,\ldots,\mu_k\})$ is accumulated by ergodic measures $(\nu_n)_n\subset\cM_f(\tilde\Lambda)$ if, and only if, $H:=H(\mu_1)=\ldots=H(\mu_k)$; and in this case for every $\nu_n$ satisfying $D(\nu_n,\mu)<\varkappa$ we have $H(\nu_n)=H:=H(\mu)$.
\end{example}

\begin{example}[Convex hull of measures supported on hyperbolic sets]
Let $\Lambda_1$, $\ldots$, $\Lambda_k\subset \Lambda$ be $k$ pairwise disjoint uniformly hyperbolic transitive%
\footnote{As in many recent works, an invariant compact set is called \emph{transitive} if it is the closure of a positive orbit. 
This notion is equivalent to the notion of \emph{topological ergodicity}.}  compact invariant sets with $s$-index $\dim E$ and denote $V_j=\cM_f(\Lambda_j)$, $j=1,\ldots,k$.
Then, for every $\varepsilon\in(0,1)$ there are positive numbers $\varkappa$, $\lambda$, and $\delta$ with the following property:
Every ergodic measure $\nu\in \cM_f(\tilde \Lambda)$ which belongs to the $\varkappa$-neighborhood of the convex hull of the (compact) set $V_1\cup\ldots\cup V_k$ is hyperbolic with $s$-index $\dim E$ and for $\nu$ the maximal Lyapunov exponent in $E$ and the minimal Lyapunov exponent in $F$ are bounded away from $0$ by $-\lambda$ and $\lambda$, respectively. Moreover, for every such measure $\nu$ there exists a set of points $\Omega_\nu\subset\tilde\Lambda$ satisfying $\nu(\Omega_\nu)>1-\varepsilon$ such that every $x\in\Omega_\nu$ has stable and unstable local manifolds of size at least $\delta$. Finally, $\Lambda_1,\dots,\Lambda_k$ are pairwise homoclinically related if, and only if,  there exists a measure $\mu\in\ch(V_1\cup\cdots\cup V_k)\setminus (V_1\cup\cdots\cup V_k)$ which is accumulated by ergodic measures $\nu_n$ in $\cM(\tilde \Lambda)$.  
Exactly as for Theorem~\ref{the:sinterclass} we can show that if one measure of the interior\footnote{We leave it as an exercise to see that this set could be empty if the sets $\La_i$ are not pairwise disjoint.} of the set $\ch(V_1\cup\cdots\cup V_k)\setminus (V_1\cup\cdots\cup V_k)$ is accumulated by ergodic measures in $\cM(\tilde \Lambda)$ then the whole convex hull $\ch(V_1\cup\cdots\cup V_k)$ is contained in the closure of periodic orbits homoclinically related with  $\La_1$. 
\end{example}

\begin{example}
	Let $\mu$ be a hyperbolic invariant Borel probability measure with $s$-index $\dim E$ which is supported in $\Lambda$. Assume that $\mu$ is not ergodic and let $\mu=\int\nu\,d\blambda(\nu)$ be its $f$-ergodic decomposition. 
	By hyperbolicity of $\mu$, $\blambda$-almost every $\nu$ is hyperbolic with the same $s$-index  $\dim E$.
	By Theorem~\ref{the:sinterclass}, $\mu$ is accumulated by ergodic measures in $\cM_f(\tilde\Lambda)$ if, and only if, $\blambda$-almost all ergodic measures $\nu$ have the same intersection class $H:=H(\nu)$. Let us assume that this is the case and consider any measure $\blambda'\in\cM(\cM(\Lambda))$ which is absolutely continuous to $\blambda$. Note that then $\blambda'(\cM_{\rm erg}(\Lambda))=1$ and hence $\mu'\eqdef\int\nu\,d\blambda'(\nu)$ is invariant and has $f$-ergodic decomposition $\blambda'$. Moreover, $\blambda'$-almost every $\nu\in\cM_{\rm erg}(\Lambda)$ is hyperbolic with $s$-index $\dim E$ and satisfies $H(\nu)=H$. Thus, we can conclude that   $\mu'$ is also accumulated by ergodic measures in $\cM_f(\tilde\Lambda)$.
\end{example}

\section{Approximation of convex sum of (uniformly) hyperbolic measures, without assuming domination}\label{sec:examples}

In our main results, we require systematically that the measures $\nu_n$ are supported on some dominated extension  $\tilde \La$ of $\La$. 
This domination is necessary for the use of $C^1$-Pesin theory, and a natural question is if we could remove this hypothesis if we assume 
a higher regularity, $C^{1+\alpha}$ or $C^2$. 
The aim of this section is to provide smooth examples showing that the domination is a necessary hypothesis even with high regularity, 
$C^\infty$ or even analytic.

We provide here two classes of examples.

\subsection{Variations of Bowen's figure-8}

We start with a first very classical example.
We consider an area preserving map $f$ of $S^1\times \bR$ given by the time-1 map of the hamiltonian vector field given by the Hamiltonian $H(x,y)=y^2-\cos(4\pi x)$, where $S^1=\bR/\bZ$. Observe that $f$ has 4 fixed points at $(i/4,0)$, $i\in\{1,\ldots,4\}$: there are two   fixed points of center type at $(0,0)$ and $(\frac12,0)$ and two hyperbolic fixed points of saddle type at $p_1=(\frac14,0)$ and $p_2=(\frac34,0)$ having the same contraction/expansion eigenvalues (compare the left figure in Figure~\ref{fig.2}). Observe the following level set 
\[
	\{(x,y)\colon H(x,y)=1\}
	= W^s(p_1)\cup W^s(p_2)=W^u(p_1)\cup W^u(p_2).
\]	 
For every $\varepsilon\in(0,1)$, the level set $\{H=1-\varepsilon\}$ is a smooth closed simple curve on which $f$ is conjugate to a (rational or irrational) rotation (with rotation number tending to $0$ as $\varepsilon\to0$). The following is a classical exercise.

\begin{lemma}\label{lem:Bowen1}
	If $\mu_n$ is a sequence of $f$-invariant probability measures supported on level sets $\{H=1-\varepsilon_n\}$ with $\varepsilon_n\to0$ as $n\to\infty$, then $\mu_n$ converges in the weak$\ast$ topology to $\frac12\delta_{p_1}+\frac12\delta_{p_2}$.
	The Lyapunov exponents of every ergodic measure $\mu_n$ are all equal to  $0$.
	
	The homoclinic and the intersection classes of the points $p_1$, $p_2$ are trivial and hence disjoint. 
\end{lemma}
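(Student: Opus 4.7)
The plan is to analyze the three assertions in turn, exploiting the Hamiltonian structure and the involution $\tau\colon(x,y)\mapsto(-x,y)$ which swaps the saddles $p_1\leftrightarrow p_2$, preserves each connected component of each level set of $H$, and conjugates the flow $\phi_t$ to $\phi_{-t}$. For the weak$\ast$ convergence, I would first note that for small $\varepsilon>0$ the level set $\{H=1-\varepsilon\}$ consists of smooth closed simple curves which are $\phi_t$-invariant and avoid $O(\sqrt\varepsilon)$-neighborhoods of the saddles; because $\nabla H$ vanishes at the saddles, the period $T(\varepsilon)$ of $\phi_t$ on each such curve diverges as $\varepsilon\to 0$, so $f=\phi_1$ acts there as a rotation of rotation number $1/T(\varepsilon)\to 0$.

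The first key step is that any $f$-invariant probability on $\{H=1-\varepsilon\}$ is, up to error $O(1/T(\varepsilon))$ in the weak$\ast$ metric, equal to the normalized time-measure of $\phi_t$: in the irrational rotation case by unique ergodicity of a fixed-point-free periodic flow on a smooth closed curve; in the rational case by noting that each $q$-periodic $f$-orbit consists of $q$ points equispaced in the flow parameter $t$ and approximating the time-measure with error $O(1/q)=O(1/T(\varepsilon))$, while any invariant measure is a convex combination of such orbit measures. The second key step is that the time-measure itself converges to $\tfrac12\delta_{p_1}+\tfrac12\delta_{p_2}$: for any continuous $\varphi$ vanishing on a neighborhood of $\{p_1,p_2\}$ the speed $\lVert X_H\rVert$ is bounded below uniformly in $\varepsilon$ on $\mathrm{supp}\,\varphi$ while $T(\varepsilon)\to\infty$, so the time spent per period in $\mathrm{supp}\,\varphi$ is bounded and $\int\varphi\,d\mu_n\to 0$. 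Hence the limit is supported on $\{p_1,p_2\}$, and $\tau$-invariance of the time-measure (automatic from unique ergodicity on each component) combined with $\tau(p_1)=p_2$ forces mass $1/2$ at each saddle.

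For the Lyapunov exponent claim, I would observe that $f$ restricted to each level curve is smoothly conjugate to a rigid rotation, so the exponent along the tangent direction vanishes (concretely, $d\phi_n$ sends $X_H(x)$ to $X_H(\phi_n(x))$ and $\lVert X_H\rVert$ is bounded above and below on the compact curve). Area preservation gives $\log\lvert\det df\rvert\equiv 0$, so the two exponents of any ergodic $\mu_n$ sum to zero and the transverse exponent is $0$ as well.

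Finally, for the intersection classes, the separatrix $\{H=1\}$ factors as the union of the two graphs $y=\pm\sqrt{1+\cos(4\pi x)}=\pm\sqrt{2}\,\lvert\cos(2\pi x)\rvert$; tracing these branches shows that each component of $W^u(p_i)\setminus\{p_i\}$ coincides with a component of $W^s(p_{3-i})\setminus\{p_{3-i}\}$, so $W^u(p_i)\cap W^s(p_i)=\{p_i\}$, there is no transverse homoclinic intersection, and the intersection classes $H(p_i)=\{p_i\}$ are trivial and disjoint. The main technical obstacle lies in the first step of the convergence argument: in the rational rotation regime a single periodic orbit could a priori be concentrated far from the saddles, and one must invoke the equidistribution of the $q$ orbit points in the flow parameter $t$ together with $q\to\infty$ as $\varepsilon\to 0$ to conclude.
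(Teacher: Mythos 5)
The paper does not actually prove Lemma~\ref{lem:Bowen1}; it is stated with the preceding remark that ``the following is a classical exercise,'' so there is no internal proof to compare against. That said, your argument is sound and does establish all three claims.

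A few observations. The involution $\tau(x,y)=(-x,y)$ indeed satisfies $H\circ\tau=H$, fixes the two centers, swaps $p_1\leftrightarrow p_2$, preserves each connected component of each level set (since $\cos$ is even), and reverses $X_H$, so it pushes the normalized time-measure on a fixed level curve to itself; this cleanly yields the $\tfrac12$--$\tfrac12$ split without having to compute dwell times near the two saddles (though the fact that $DX_H(p_1)$ and $DX_H(p_2)$ have identical spectra would give the same conclusion by a direct estimate). Your argument that the time-measures escape to the saddles --- bounded-below speed on the support of a test function vanishing near $\{p_1,p_2\}$ against diverging period $T(\varepsilon)$ --- is exactly right. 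The Lyapunov exponent argument (boundedness of $\lVert X_H\rVert$ on a fixed periodic level curve gives a zero tangential exponent, and area preservation kills the transverse one) is standard and correct. The identification of each separatrix branch as a heteroclinic connection $W^u(p_i)=W^s(p_{3-i})$ with no transverse intersection, hence trivial and disjoint intersection classes, is also correct.

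One small caution, which you already flag yourself: the bare assertion that a $q$-periodic orbit measure is ``$O(1/q)$ close'' to the time-measure is not literally a uniform Riemann-sum error, because the time parametrization $\Phi\colon\RR/\ZZ\to\{H=1-\varepsilon\}$ has derivative of order $T(\varepsilon)$ on the part of the curve away from the saddles. The fix is exactly the one you sketch: for a fixed test function $\varphi$, separate a neighborhood of $\{p_1,p_2\}$ (where $\varphi$ is nearly constant and one just counts orbit points in a few intervals of normalized time) from its complement (which occupies normalized time $\to 0$ and hence contributes $\to 0$ to both the time-measure and the equidistributed orbit measure, with an $O(1/q)$ boundary error). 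With that done your proof is complete. Finally, you are right to work component-by-component: the level set $\{H=1-\varepsilon\}$ for small $\varepsilon>0$ has two connected components, one in each lobe of the figure-eight, and both carry time-measures converging to $\tfrac12\delta_{p_1}+\tfrac12\delta_{p_2}$; the paper's phrase ``a smooth closed simple curve'' is a slight abuse.

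Overall: the proposal is correct and supplies a proof that the paper omits.
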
 

Note that this example is not contradicting the results in this paper: the measures $\mu_n$ are not supported in any dominated extension of the hyperbolic set $\Lambda=\{p_1,p_2\}$.

Let us present now two variations of the above example (compare the middle and the right figure in Figure~\ref{fig.2}). 
\begin{itemize}
\item [1)] We can easily modify the map $f$ such that the homoclinic class of $p_1$ is nontrivial while the one of $p_2$ remains trivial (middle figure), keeping all the properties claimed in Lemma~\ref{lem:Bowen1}.   
\item [2)] We can modify the map inside the disk bounded by the heteroclinic connections in such a way that there appears a sequence of hyperbolic periodic saddles whose measures   tend in the weak$\ast$ topology to $\frac12\delta_{p_1}+\frac12\delta_{p_2}$ (right figure). In the case that $f$ is $C^2$ the Lyapunov exponents of these saddles tend to $0$.
\end{itemize}

The construction presented below provides an example where the  hyperbolic measures 
approximating a convex combination of two ergodic hyperbolic measures have Lyapunov exponents all uniformly bounded from $0$.

\begin{figure}
\begin{minipage}[h]{\linewidth}
\centering
\begin{overpic}[scale=.22]{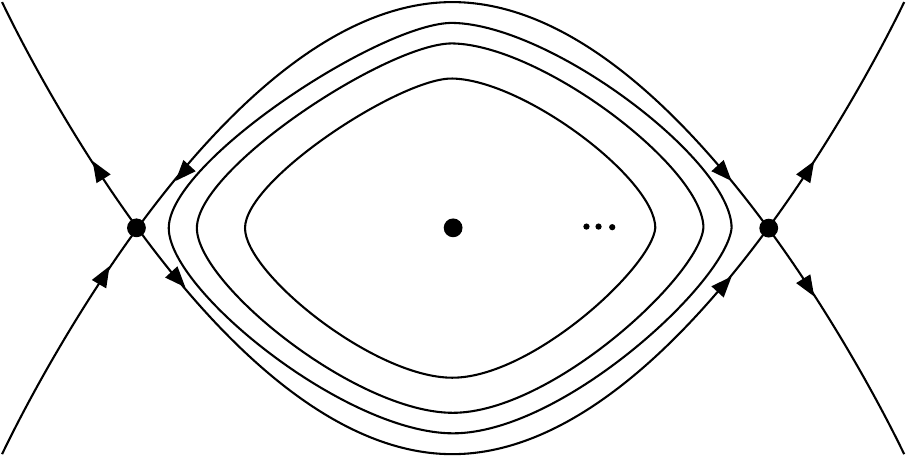}
 \end{overpic}
 \hspace{0.5cm}
\begin{overpic}[scale=.22]{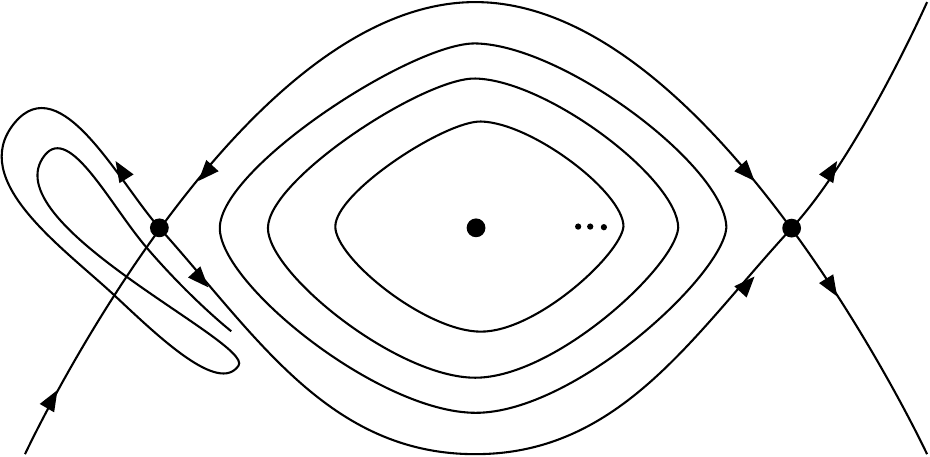}
\end{overpic}
 \hspace{0.5cm}
\begin{overpic}[scale=.22]{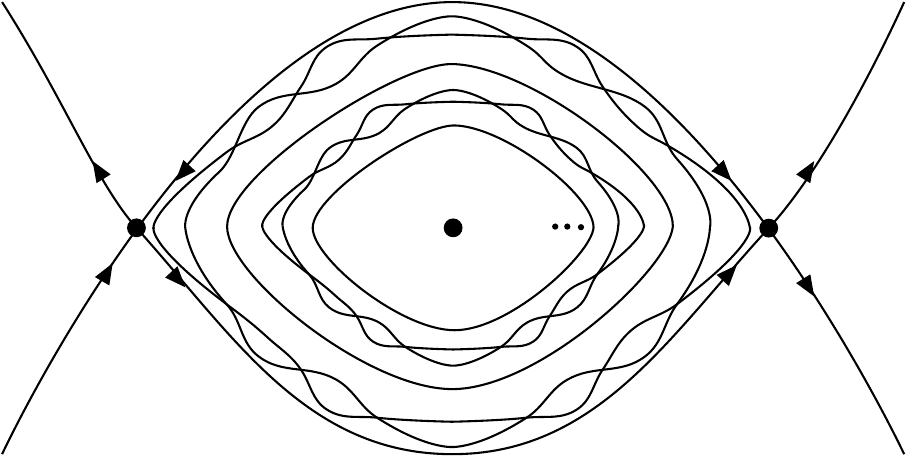}
\end{overpic}
\caption{Variations of Bowen's figure-8}
\label{fig.2}
\end{minipage}
\end{figure}

\subsection{Blowing up of an Anosov diffeomorphism}

Let $A\in SL(2,\ZZ)$ be a hyperbolic linear automorphism with eigenvalues $0<\frac 1\lambda<1<\lambda$. 
We continue denoting by $A\colon \TT^2\to \TT^2$  the induced  linear Anosov diffeomorphism on the torus
$\TT^2$. 

The point $p=(0,0)\in\TT^2$ is a fixed point of $A$. 

Let us denote by $S$ the (non-orientable) closed surface obtained from $\TT^2$ by blowing up the point $p$, and let 
$\pi\colon S\to \TT^2$ be the canonical projection. $S$ is naturally endowed with an $\RR$-analytic structure so that 
the projection $\pi$ is $\RR$-analytic. The projection is a diffeomorphism over $\TT^2\setminus\{(0,0)\}$, and the exceptional fiber 
$C=\pi^{-1}(0,0)$ is canonically identified with the circle $\RR\PP^1$ (see Figure~\ref{fig.1}). 

\begin{figure}
\begin{minipage}[h]{\linewidth}
\centering
\begin{overpic}[scale=.42]{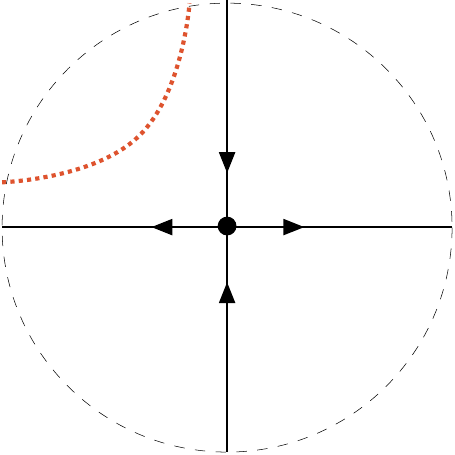}
        \put(54,40){\small $p=(0,0)$}
 \end{overpic}
 \hspace{1.5cm}
\begin{overpic}[scale=.32]{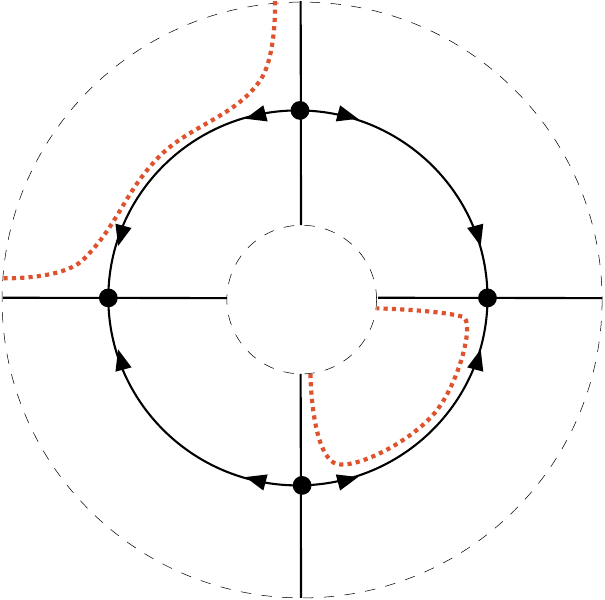}
        \put(23,43){\small $p_1$}
        \put(84,43){\small $p_1$}
        \put(54,85){\small $p_2$}
        \put(54,12){\small $p_2$}
\end{overpic}
\caption{Blowing up the hyperbolic fixed point $p$}
\label{fig.1}
\end{minipage}
\end{figure}

A classical result asserts that:

\begin{lemma}
\begin{itemize}
\item The diffeomorphism $f$ admits a (unique) continuous lift $f_A$ on $S$, and  $f_A\colon S\to S$ is an analytic 
diffeomorphism of $S$. In particular $\pi$ induces a $\bR$-analytic conjugacy between the restrictions of $f_A$ to $S\setminus C$ and of $A$ to $\TT^2\setminus\{p\}$.

\item The restriction of $f_A$  to the exceptional fiber has exactly two fixed points $p_1$ and $p_2$. 
\item The points $p_1$ and $p_2$ are hyperbolic saddle points of $f_A$, whose eigenvalues are $0<\frac 1{\lambda^2}<1<\lambda$ and 
$0<\frac 1{\lambda}<1<\lambda^2$, respectively.
\item the stable manifold $W^s(p_1,f_A)$ is $C\setminus \{p_2\}$ and the unstable manifold $W^u(p_2,f_A)$ is $C\setminus\{p_1\}$. 
As a consequence, the homoclinic classes and intersection classes of $p_1$ and $p_2$ are trivial.
\item $W^u(p_1,f_A)\setminus\{p_1\}$ is the lift of $W^u(p,A)\setminus \{p\}$ and 
$W^s(p_2,f_A)\setminus\{p_2\}$ is the lift of $W^s(p,A)\setminus \{p\}$.
\end{itemize}
\end{lemma}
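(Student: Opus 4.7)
The plan is to work in explicit $\RR$-analytic coordinates provided by the standard blow-up of a point on an analytic surface. The blow-up $S$ at $p=(0,0)$ is covered by two analytic charts: in the first, coordinates $(x,t)$ with projection $\pi(x,t)=(x,xt)$; in the second, $(s,y)$ with $\pi(s,y)=(sy,y)$; the overlap is given by $t=1/s$. The exceptional fiber $C$ is $\{x=0\}$ in the first chart and $\{y=0\}$ in the second, and the coordinate $t$ (resp.\ $s$) identifies $C$ with $\RR\PP^1$, the space of lines through $p$. First I would compute $f_A$ in these charts: writing $A(x,y)=(ax+by,cx+dy)$ with $ad-bc=1$, a direct substitution gives
\[
  (x,t)\longmapsto\bigl(x(a+bt),\,(c+dt)/(a+bt)\bigr)
\]
on the first chart and a symmetric formula on the second; both are manifestly $\RR$-analytic everywhere (including on $C$), and they agree on the overlap by construction. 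This proves existence and analyticity of the lift; uniqueness of the \emph{continuous} lift is automatic, since $S\setminus C$ is dense and $\pi$ restricts there to an analytic diffeomorphism conjugating $f_A$ to $A$.

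Second, I would locate the fixed points on $C$ and compute the derivatives. On $\{x=0\}$ the map becomes the M\"obius transformation $\varphi(t)=(c+dt)/(a+bt)$, whose fixed points are exactly the slopes of the eigendirections of $A$. Setting $p_1=[v^u]$ and $p_2=[v^s]$, and using $A(1,t_0)=\lambda(1,t_0)$ at $p_1$, one gets $a+bt_0=\lambda$; hence $\partial x'/\partial x=\lambda$ at $p_1$, while the derivative along $C$ is $\varphi'(t_0)=\det(A)/(a+bt_0)^2=1/\lambda^2$. This gives the announced eigenvalues $0<1/\lambda^2<1<\lambda$ at $p_1$. The same computation at $p_2$, performed in the chart $(s,y)$ to avoid $t=\infty$, yields eigenvalues $\lambda^2$ along $C$ and $1/\lambda$ transverse to $C$; in particular, both points are hyperbolic saddles.

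Third, since $C$ is $f_A$-invariant and the induced M\"obius dynamics on $C$ has $p_1$ as its only sink and $p_2$ as its only source, every point of $C\setminus\{p_2\}$ converges to $p_1$ under iteration, so $C\setminus\{p_2\}\subset W^s(p_1,f_A)$. Conversely, the stable eigenspace at $p_1$ is tangent to $C$, so by the stable manifold theorem $W^s_{\mathrm{loc}}(p_1)$ lies inside $C$; invariance of $C$ then forces the global stable manifold $W^s(p_1,f_A)=\bigcup_{n\ge 0}f_A^{-n}(W^s_{\mathrm{loc}}(p_1))$ to lie in $C$. Combining, $W^s(p_1,f_A)=C\setminus\{p_2\}$, and symmetrically $W^u(p_2,f_A)=C\setminus\{p_1\}$. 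For the transverse branches, the unstable eigenspace at $p_1$ is transverse to $C$ and its image under $d\pi$ is the unstable eigenline of $A$ at $p$; since $\pi$ conjugates $f_A|_{S\setminus C}$ to $A|_{\TT^2\setminus\{p\}}$, dimension and invariance give $\pi(W^u(p_1,f_A)\setminus\{p_1\})=W^u(p,A)\setminus\{p\}$, and both branches of $W^u(p,A)\setminus\{p\}$ at $p$ limit onto the single point $[v^u]=p_1$ on $C$ because $\RR\PP^1$ identifies $\pm v^u$, so the lift closes up continuously at $p_1$. The same argument handles $W^s(p_2,f_A)$.

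Triviality of the homoclinic and intersection classes is then an immediate corollary: since $W^s(p_1)\subset C$ while $W^u(p_1)\setminus\{p_1\}$ meets $C$ only at $p_1$, there are no homoclinic intersection points of $p_1$ other than $p_1$ itself; moreover any other hyperbolic periodic point $q\in S$ is the lift of an $A$-periodic point $\pi(q)\in\TT^2\setminus\{p\}$, whose stable/unstable eigenlines do not pass through $p$, so their lifts to $S$ avoid $C$ entirely and cannot meet $W^s(p_1)\subset C$. The same reasoning applies to $p_2$. I expect the only delicate points to be the chart change needed to treat $p_2$ (which sits at $t=\infty$ in the first chart), and the verification that both branches of $W^u(p,A)\setminus\{p\}$ lift to the \emph{same} connected component of $W^u(p_1,f_A)\setminus\{p_1\}$, which ultimately rests on the $\RR\PP^1$ identification $v\sim -v$ used to define the blow-up.
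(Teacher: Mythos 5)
The paper does not prove this lemma; it is stated as a ``classical result'' and left unproved. Your argument is the standard verification by computing $f_A$ in the two affine blow-up charts, and it is correct: the formula $(x,t)\mapsto(x(a+bt),(c+dt)/(a+bt))$, the identification of the fixed points on $C$ with the eigendirections of $A$, the derivative computation giving eigenvalues $\lambda$ transverse to $C$ and $\det(A)/(a+bt_0)^2=1/\lambda^2$ along $C$ at $p_1$ (and symmetrically $1/\lambda$, $\lambda^2$ at $p_2$), and the North--South dynamics of the induced M\"obius map on $C\cong\RR\PP^1$ all go through. Two minor points worth tightening if this were to be written out: the map in the chart $(x,t)$ is analytic only where $a+bt\neq 0$, so ``manifestly analytic everywhere including on $C$'' should really say that the two charts together cover $S$ near $C$ and the transition handles the locus $a+bt=0$; and for the triviality of the homoclinic/intersection class of $p_1$, it is worth stating explicitly that $W^s(p_1)\subset C$ while every stable/unstable manifold of any \emph{other} hyperbolic periodic point is contained in $S\setminus C$ (being the lift of an $A$-invariant leaf that misses $p$), so no transverse intersection with $W^s(p_1)$ or $W^u(p_2)$ can occur even though those lifted leaves accumulate on $C$.
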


\begin{theorem}\label{the:exeunihyp}
 Let $\cM_{f_A}$  be the (convex) set of invariant probability measures of $f_A$. Then a measure $\mu\in\cM_{f_A}$ is
 accumulated in  the weak$\ast$ topology by ergodic measures $\nu_n$ if, and only if, 
 \begin{itemize}
  \item either $\mu\in\{\delta_{p_1},\delta_{p_2}\}$, and hence $\nu_n=\mu$ for $n$ large enough,
  \item or $\mu(p_1)=\mu(p_2)$, in which case $\mu$ is the limit of a sequence of periodic orbits whose Lyapunov exponents are 
  $\pm\log\lambda$. 
 \end{itemize}
\end{theorem}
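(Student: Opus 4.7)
My plan is to exploit the semiconjugacy $\pi\colon S\to\TT^2$ and the linearity of $A$. First I classify the ergodic $f_A$-invariant probability measures. Since $C$ is $f_A$-invariant, every ergodic $\nu$ satisfies $\nu(C)\in\{0,1\}$. If $\nu(C)=1$ then $\nu$ is supported on $C$, where $f_A|_C$ is a circle homeomorphism with attractor $p_1$, repeller $p_2$, and every other orbit heteroclinic from $p_2$ to $p_1$; by Poincar\'e recurrence $\nu\in\{\delta_{p_1},\delta_{p_2}\}$. If $\nu(C)=0$ then $\pi$ is a measurable bijection between $\supp\nu$ and $\supp\pi_*\nu\subset\TT^2\setminus\{p\}$, so $\nu$ is the unique lift of an ergodic $A$-measure $\hat\nu\ne\delta_p$, and every such $\hat\nu$ lifts to a unique ergodic $f_A$-measure supported in $S\setminus C$.

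The central step is the following symmetry lemma. Using adapted linear coordinates $(x_1,x_2)$ near $p$ in which $A=\operatorname{diag}(\lambda,1/\lambda)$, for small $\epsilon,\delta>0$ set
\[
	\pi(U_1)=\{(x_1,x_2)\colon 0<|x_1|<\epsilon,\,|x_2|<\delta|x_1|\},\quad
	\pi(U_2)=\{(x_1,x_2)\colon 0<|x_2|<\epsilon,\,|x_1|<\delta|x_2|\},
\]
which are bowties around the unstable and stable eigendirections respectively; the open sets $U_i=\pi^{-1}(\pi(U_i))\subset S$ shrink to $\{p_i\}$ as $\epsilon,\delta\to 0$. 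I claim that for every $A$-invariant measure $\hat\nu$ on $\TT^2$ with $\hat\nu(\{p\})=0$ one has $\hat\nu(\pi(U_1))=\hat\nu(\pi(U_2))$. To prove it I would note that the $A$-iterate at time $t$ of a point $(x_1^0,x_2^0)\ne(0,0)$ is $(\lambda^tx_1^0,\lambda^{-t}x_2^0)$, so if $|x_1^0x_2^0|<\delta\epsilon^2$ the orbit visits $\pi(U_2)$ on a single time interval $(-T_-,t_2^*)$ and $\pi(U_1)$ on $(t_1^*,T_+)$ whose lengths coincide:
\[
	t_2^*+T_-\;=\;T_+-t_1^*\;=\;\frac{1}{\log\lambda}\log\frac{\epsilon\sqrt{\delta}}{\sqrt{|x_1^0x_2^0|}}.
\]
Summed along the whole orbit, the Birkhoff sums of $\mathbf 1_{\pi(U_1)}$ and $\mathbf 1_{\pi(U_2)}$ agree up to one boundary excursion, and Birkhoff's ergodic theorem yields the claim.

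From here the two implications follow. For the forward direction, given $\nu_n\to\mu$ with $\nu_n$ ergodic, either along a subsequence $\nu_n\in\{\delta_{p_1},\delta_{p_2}\}$ forcing $\mu=\delta_{p_i}$, or each $\nu_n$ is a lift of a non-$\delta_p$ ergodic $A$-measure, in which case the lemma gives $\nu_n(U_1)=\nu_n(U_2)$; choosing a countable family of $\mu$-null-boundary cones shrinking to $\{p_1\}$ and $\{p_2\}$, portmanteau yields $\mu(\{p_1\})=\mu(\{p_2\})$. The same identity also shows that $\delta_{p_i}$ is isolated among ergodic measures, since a lifted $\nu$ close to $\delta_{p_1}$ would have $\nu(U_1)$ close to $1$ and $\nu(U_1)=\nu(U_2)$, contradicting $\nu(U_1\cup U_2)\le 1$. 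For the converse, writing $\mu=s\delta_{p_1}+s\delta_{p_2}+\mu'$ with $\mu'$ on $S\setminus C$, I would apply Sigmund's specification theorem for the linear Anosov $A$ to approximate $\hat\mu=\pi_*\mu=2s\delta_p+\pi_*\mu'$ by ergodic periodic orbit measures $\hat\nu_n\ne\delta_p$, lift them to ergodic $f_A$-measures $\nu_n$ on $S\setminus C$, and observe that any subsequential weak$\ast$ limit $\mu^*$ projects to $\hat\mu$ and, by the symmetry lemma applied at finite $n$ together with passage to the limit, satisfies $\mu^*(\{p_1\})=\mu^*(\{p_2\})=s$, forcing $\mu^*=\mu$. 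The Lyapunov exponents $\pm\log\lambda$ come for free from $A$ having constant derivative, since the lifts sit in $S\setminus C$ where $\pi$ is a local diffeomorphism. The main obstacle is the excursion analysis underlying the symmetry lemma: checking that the orbit's entry into and exit from the $\epsilon$-window produce only a uniformly bounded error washed out by the Birkhoff average, and selecting cone parameters with $\mu$-null boundaries so that the portmanteau step works.
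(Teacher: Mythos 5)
Your proposal is correct and fleshes out precisely the argument the paper compresses into one sentence (``an orbit of $A$ approaching $p$ lifts to an orbit spending the same time near $p_1$ and $p_2$''): the excursion-length computation in linearizing coordinates is exactly that observation, the Birkhoff-sum bookkeeping turns it into the symmetry identity for invariant measures, and the classification of ergodic $f_A$-measures plus the lift of Sigmund's theorem handles the two implications. The Lyapunov-exponent claim is also correctly reduced to $D\pi$ being a local diffeomorphism off $C$ and $A$ having constant derivative.

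One step needs a small repair. As you define them, $U_1,U_2\subset S\setminus C$, so $p_i\in\partial U_i$ and therefore $\mu(\partial U_i)\ge \mu(\{p_i\})$; whenever $\mu$ charges $p_i$ (which is precisely the case of interest) there is \emph{no} choice of $\epsilon,\delta$ making $\partial U_i$ $\mu$-null, and the portmanteau step as written does not apply. The fix is to replace $U_i$ by the open set $V_i\eqdef\interior_S(\overline{U_i})$, which is $U_i$ together with a short open arc of $C$ around $p_i$. Then $p_i\in V_i$, the boundary $\partial V_i$ consists of two points of $C$ and the lifted hypersurfaces $\{|x_1|=\epsilon\}$, $\{|x_2|=\delta|x_1|\}$ (so generic $\epsilon,\delta$ give $\mu(\partial V_i)=0$), and $\nu_n(V_i)=\nu_n(U_i)$ for every lifted ergodic $\nu_n$ since $\nu_n(C)=0$. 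With $V_i$ in place of $U_i$ both the forward direction (including the isolation of $\delta_{p_i}$) and the identification of the subsequential limit $\mu^*$ in the converse go through exactly as you outlined.
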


\begin{proof}
	We use the $\bR$-analytic conjugacy between the restriction of $A$ to $\mathbb T^2\setminus \{p\}$ and of $f_A$ to $S\setminus C$. It suffices to observe  that if an orbit of $A$ approaches $p$ then its corresponding orbit of $f_A$ passes the same time close to $C$  with approximately half of this time close to $p_1$ and $p_2$, respectively.	
\end{proof}

As a straightforward corollary one gets the following.

\begin{corollary} The hyperbolic periodic points $p_1$ and $p_2$ have trivial and disjoint homoclinic classes and intersection classes.

Nevertheless the measure $\frac 12 \delta_{p_1} +\frac 12 \delta_{p_2}$ is the weak$\ast$ limit of the measures associated to periodic orbits 
$\cO_n$  of $f_A$ in $S\setminus C$. In particular the orbits $\cO_n$ are hyperbolic and their Lyapunov exponents are $\pm\log \lambda$. 
\end{corollary}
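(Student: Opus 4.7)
The plan splits naturally into the two assertions of the corollary.

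For the first assertion (trivial and disjoint classes), I would read off the invariant-manifold structure recorded in the preceding lemma. Since $W^s(p_1,f_A)=C\setminus\{p_2\}$ and $W^u(p_1,f_A)\setminus\{p_1\}$ is the lift of $W^u(p,A)\setminus\{p\}$, it lies entirely in $S\setminus C$; consequently $W^s(p_1)\cap W^u(p_1)=\{p_1\}$ contains no transverse homoclinic point. Hence the intersection class of $p_1$ reduces to the singleton $\{p_1\}$, and the homoclinic class, being its closure, is also $\{p_1\}$. The symmetric argument yields the same conclusion for $p_2$. It remains to verify that the pair $p_1,p_2$ is not homoclinically related: indeed $W^s(p_1)\cap W^u(p_2)=C\setminus\{p_1,p_2\}$, but here both manifolds coincide as subsets of the $1$-dimensional curve $C$ inside the $2$-surface $S$, so this intersection fails to be transverse. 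Therefore the two intersection classes are trivial and disjoint, and so are the two homoclinic classes.

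For the second assertion I would simply invoke Theorem~\ref{the:exeunihyp}. The convex combination $\mu=\tfrac12\delta_{p_1}+\tfrac12\delta_{p_2}$ satisfies $\mu(\{p_1\})=\mu(\{p_2\})$ and is distinct from both $\delta_{p_i}$, so it falls into the second alternative of that theorem. This alternative directly furnishes a sequence of hyperbolic periodic orbits $\cO_n$ whose invariant measures converge weakly to $\mu$ and whose Lyapunov exponents are $\pm\log\lambda$. The orbits $\cO_n$ necessarily lie in $S\setminus C$, since the projective action of $A$ on $\RR\PP^1\simeq C$ is a M\"obius transformation whose only periodic points are the two eigen-directions, namely $p_1$ and $p_2$ themselves; any other periodic orbit of $f_A$ must therefore avoid $C$.

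In this formulation there is no real obstacle: the whole content of the corollary is packaged inside Theorem~\ref{the:exeunihyp}, and that is where the genuine analysis takes place. The mechanism driving Theorem~\ref{the:exeunihyp}, which is the step I would want to be careful about if redoing the corollary from scratch, is the $\bR$-analytic conjugacy $\pi|_{S\setminus C}$ between $(f_A,S\setminus C)$ and $(A,\TT^2\setminus\{p\})$. Via this conjugacy, hyperbolic periodic orbits of $A$ correspond to hyperbolic periodic orbits of $f_A$ in $S\setminus C$ with the same multipliers $\lambda^{\pm \pi(\cO_n)}$ on the return map, yielding the Lyapunov exponents $\pm\log\lambda$. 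The asymptotic equidistribution between $p_1$ and $p_2$ is obtained by tracking the projective direction $y_n/x_n=\lambda^{-2n}(y_0/x_0)$ during the stay of an $A$-orbit inside a small box around $p$: the direction starts near $[0:1]=p_2$, ends near $[1:0]=p_1$, and spends asymptotically half of the iterates on each side, forcing the $f_A$-measure to distribute the mass concentrated near $C$ equally between the two fixed points.
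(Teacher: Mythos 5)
Your proof is correct and follows the same route the paper leaves implicit (it states only ``As a straightforward corollary one gets the following''): the triviality of the intersection and homoclinic classes is already asserted in the preceding lemma as a consequence of the description of $W^{s/u}(p_i,f_A)$, and the second assertion is exactly the second alternative of Theorem~\ref{the:exeunihyp}. The only minor remark is that your check that $W^s(p_1)\cap W^u(p_2)$ is non-transverse is logically redundant — once each intersection class is the singleton $\{p_i\}$, disjointness for $p_1\ne p_2$ is automatic since $\sim_H$ is an equivalence relation — though the observation is of course correct.
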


\begin{remark}
	Starting with an Anosov diffeomorphism so that the fixed point $p$ has eigenvalues $\lambda_1<1<\lambda_2$ and blowing up $p$, we get the same result however with measures approximating the convex combination with weights $s$ and $1-s$ satisfying $s/(1-s)=-\log\lambda_1/\log\lambda_2$.
\end{remark}

\begin{remark}
	Multiplying the above dynamics by a Anosov diffeomorphism on ${\mathbb T}^2$ we get an example where both hyperbolic fixed points $p_1$ and $p_2$ have nontrivial but disjoint  homoclinic and intersection classes, keeping all  the properties claimed above.
\end{remark}

\bibliographystyle{amsplain}

\end{document}